\newcommand{\R}{{\mathbb R}}
\numberwithin{equation}{section}
\DeclareMathOperator*{\argmin}{arg\,min}
\newtheorem{proposition}{Proposition}
\newtheorem{definition}{Definition}
\date{}
\title{Whiteness-based parameter selection for Poisson data in variational image processing}
\newcommand{\D}{\mathrm{D}}
\newcommand{\I}{\mathrm{I}}
  \definecolor{cosmiclatte}{rgb}{1.0, 0.97, 0.91}
\definecolor{azure}{rgb}{0.94, 1.0, 1.0}
\definecolor{beaublue}{rgb}{0.74, 0.83, 0.9}
\definecolor{Gray}{rgb}{0.75,0.75,0.75}
\definecolor{apricot}{rgb}{0.98, 0.81, 0.69}
\definecolor{almond}{rgb}{0.94, 0.87, 0.8}
\definecolor{LightCyan}{rgb}{0.88,1,1}
\author[1]{Francesca Bevilacqua \thanks{francesca.bevilacqu8@unibo.it}}
\author[1]{Alessandro Lanza \thanks{alessandro.lanza2@unibo.it}}
\author[2]{Monica Pragliola\thanks{monica.pragliola@unina.it}}
\author[1]{Fiorella Sgallari \thanks{fiorella.sgallari@unibo.it}}
\affil[1]{Department of Mathematics, University of Bologna, Bologna, Italy}
\affil[2]{Department of Mathematics and Applications, University of Naples Federico II, Naples, Italy}
\begin{document}
	
\maketitle

\noindent \textbf{Abstract.} We propose a novel automatic parameter selection strategy for variational imaging problems under Poisson noise corruption. 
The selection of a suitable regularization parameter, whose value is crucial in order to achieve high quality reconstructions, is known to be a particularly hard task in low photon-count regimes. 
In this work, we extend the so-called residual whiteness principle originally designed for additive white noise to Poisson data. The proposed strategy relies on the study of the whiteness property of a standardized Poisson noise process. 
After deriving the theoretical properties that motivate our proposal, we solve the target minimization problem with a linearized version of the Alternating Direction Method of Multipliers (ADMM), which is particularly suitable in presence of a general linear forward operator. Our strategy is extensively tested on image restoration and Computed Tomography (CT) reconstruction problems, and compared to the well-known discrepancy principle for Poisson noise proposed by Zanella at al. and with a nearly exact version of it previously proposed by the authors.

\section{Introduction}
\label{sec:intro}

In many research areas related to imaging applications, such as astronomy, microscopy and computed tomography, the acquired images are formed by counting the number of photons irradiated by a source and hitting the image domain. The number of photons measured by the sensor can differ from the expected one due to fluctuations that are modelled by a Poisson noise \cite{poissbook}.

The general image formation (or degradation) model under Poisson noise corruption in vectorized form reads
\begin{equation}
\bm{y} \;\,{=}\,\; \bm{\mathrm{poiss}}
\left(\,\overline{\bm{\lambda}}\,\right)\,, \quad\;
\overline{\bm{\lambda}} = \bm{g}\left(\bm{\mathrm{H} }\bar{\bm{x}}\right) + \bm{b}\,,
\label{eq:formod}
\end{equation}
%
where $\,\bm{y} \in \mathbb{N}^m$,  $\overline{\bm{x}} \in \R_+^n$ and $\bm{b}\in\R_+^{m}$ - with $\mathbb{N}$ and $\R_+$ denoting the sets of natural numbers including zero and of non-negative real numbers, respectively - are vectorized forms of the observed degraded $m_1 \times m_2$ image, the unknown uncorrupted $n_1 \times n_2$ image and the so-called (usually known) background emission $m_1 \times m_2$ image, respectively, with $m = m_1m_2$, $n = n_1 n_2$.
Matrix $\bm{\mathrm{H}} \in \R^{m \times n}$ contains the coefficients of a linear degradation operator, whereas the vectorial function $\bm{g}:\R^m\to \R^m$ is the identity function or a nonlinear function modelling the eventual presence of (deterministic) nonlinearities in the degradation process. 
$\bm{\mathrm{H}}$ and $\bm{g}$ are determined by the specific application at hand and here are assumed to be known. In particular, for the applications of interest in this paper, the function $\bm{g}$ can be restricted to the simplified form $\,\bm{g}(\bm{h}) = \left(g(h_1),g(h_2),\ldots,g(h_m)\right)^T$, with $g: \R_+ \to \R_+$.
Finally, $\bm{\mathrm{poiss}}
\left(\,\overline{\bm{\lambda}}\,\right) := \left(\mathrm{poiss}\left(\overline{\lambda}_1\right),\mathrm{poiss}\left(\overline{\lambda}_2\right),\ldots,\mathrm{poiss}\left(\overline{\lambda}_m\right)\right)$, with  $\mathrm{poiss}(\,\overline{\lambda_i}\,)$ indicating the realization of a Poisson-distributed random variable with parameter (mean) $\overline{\lambda}_i$, hence $\overline{\bm{\lambda}} \in \R_+^m$ is the vectorized form of the $m_1 \times m_2$ noise-free degraded image.

The inverse problem of determining a good estimate $\bm{x}^*$ of the uncorrupted image $\overline{\bm{x}}$ given the degraded observation $\bm{y}$ is a hard task even when $\bm{\mathrm{H}}$, $\bm{g}$ and $\bm{b}$ are known. In fact, such an inverse problem is typically ill-posed and, hence, some a priori information or belief on the target image $\overline{\bm{x}}$ must necessarily be encoded, e.g. in the form of regularization, in order to obtain an acceptable estimate $\bm{x}^*$. A popular and effective approach allowing to explicitly include regularization is the so-called variational approach, according to which an estimate $\bm{x}^*$ of the original image $\bar{\bm{x}}$ is sought as the global minimizer of a given cost (or energy) function $\mathcal{J}:\R^n\to\R$; in formula
\begin{equation}
\label{eq:varmod}
\bm{x}^*(\mu)\in\argmin_{\bm{x}\,\in\,\R_+^n}\{\,  \mathcal{J}(\bm{x};\mu)\;{:=}\; \mathcal{R}(\bm{x}) \;{+}\; \mu \, \mathcal{F}(\bm{\lambda};\bm{y})\,\}\,,\quad \bm{\lambda} \;{:=}\,\bm{g}(\bm{\mathrm{H}x})+\bm{b}\,,
\end{equation}
where $\mathcal{R}$ and $\mathcal{F}$ are referred to as the \emph{regularization term} and the \emph{data fidelity term}, respectively, and where the so-called regularization parameter $\mu \in \R_{++}$ allows to balance the contribution of the two terms in the overall cost function. 

The data fidelity term $\mathcal{F}$ measures the
discrepancy between the noise-free degraded image $\bm{\lambda}$ and the noisy observation $\bm{y}$ in a way that  accounts for the noise statistics. In presence of Poisson noise, according to the Maximum Likelihood (ML) estimation approach, the fidelity term is typically set as the (generalized) Kullback-Leibler (KL) divergence between $\bm{\lambda}$ and $\bm{y}$ (see, e.g., \cite{nedp}), that is
\begin{equation}
\mathcal{F}(\bm{\lambda};\bm{y})\;{=}\; \mathrm{KL}(\bm{\lambda};\bm{y})\;{:=}\; \sum_{i=1}^m \, \left(\lambda_i-y_i \ln \lambda_i + y_i \ln y_i - y_i\right)\,,
\label{eq:KL}
\end{equation}
where $\,0 \ln 0 = 0\,$ is assumed. We indicate by $\mathcal{R}$-KL the class of variational models defined as in \eqref{eq:varmod} with $\mathcal{F}$ equal to the KL divergence term in \eqref{eq:KL}.

The regularization term $\mathcal{R}$ in \eqref{eq:varmod} encodes prior information or beliefs on the target uncorrupted image $\overline{\bm{x}}$. One of the most popular and widely adopted regularizers in imaging is the  Total Variation (TV) semi-norm \cite{tv}, which reads
\begin{equation}
\mathcal{R}(\bm{x})\;{=}\;\mathrm{TV}(\bm{x}) \;{:=}\;\sum_{i=1}^n \|(\bm{\nabla x})_i\|_2\,,
\label{eq:TV}
\end{equation}
where $(\bm{\nabla x})_i \in \R^2$ denotes the discrete gradient of image $\bm{x}$ at \mbox{pixel location $i$.} 
The TV term is known to be particularly effective for the regularization of piece-wise constant images as it promotes sparsity of gradient magnitudes. 
We denote by TV-KL the $\mathcal{R}$-KL variational model with $\mathcal{R}$ equal to the TV function in \eqref{eq:TV}.



The regularization parameter $\mu$ in \eqref{eq:varmod} is of crucial importance for getting high quality reconstructions $\bm{x}^*(\mu)$. In fact, it is well established that, even upon the selection of suitable fidelity and regularization terms, an incorrect value of $\mu$ can easily lead to meaningless reconstructions.

For this reason, a lot of research has been devoted to the design of effective strategies for the $\mu$-selection task under Poisson noise corruption. In abstract form, such strategies can be formulated as follows:
\begin{equation}
\text{Select}\;\;\mu=\mu^*\;\;\text{such that}\;\;\mathcal{C}(\bm{x}^*(\mu^*))\;\;\text{is satisfied}\,,
\end{equation}
where $\bm{x}^*(\mu):\R_{++}\to\R^{n}$ is the image reconstruction function introduced in \eqref{eq:varmod} and where $\mathcal{C}(\cdot)$ is some selection criterion or principle.

The selection principles designed so far to deal with Poisson noise have mostly been inspired by the very wide literature related to the parameter selection under additive white Gaussian noise corruption, and they can be thus divided into two main classes according to their original derivation set-up:
\begin{enumerate}
	\item principles derived from imposing the value of some $\mu$-dependent quantity; 
	\item principles derived from optimizing some $\mu$-dependent quantity. 
\end{enumerate}
As typical examples of first class strategies, we mention the \emph{discrepancy principles} (DP) whose general form is given by
\begin{equation}\label{eq:discrep}
\mathcal{C}(\bm{x}^*(\mu^*)):\;\quad \mathcal{D}(\mu;\bm{y}) \;{=}\; \Delta\in\R_{++}\,,
\end{equation}
with $\mathcal{D}(\mu;\bm{y})=\mathrm{KL}(\bm{\lambda}(\mu);\bm{y})$, and where the above equality is referred to as \emph{discrepancy equation} while $\Delta$ is the so-called \emph{discrepancy value} that changes when considering different DP instances. 

The Morozov discrepancy principle, which is widely adopted in presence of Gaussian noise, naturally induces a DP version according to which the scalar value $\Delta$ is replaced by the expected value of the KL fidelity term regarded as a function of the $m$-variate random vector $(Y_1,Y_2,\ldots,Y_m)$, with vector $\bm{\lambda}$ fixed. Unfortunately, recovering an exact closed-form expression for the target expected value has been proven to be theoretically unfeasible, especially in low counting regimes, i.e. when the entries of $\bm{\lambda}$ are small \cite{nedp}. 

A popular alternative to the aforementioned exact but theoretical DP, which has been proposed in \cite{bert1} for the image denoising problem and extended in \cite{bert2} to the restoration task, is based on truncating the Taylor series expansion of the theoretical expected value. For this reason, in \cite{nedp} this version of DP has been referred to as Approximate DP (ADP); in formula, it reads
\begin{equation}\label{eq:ADP}
\tag{ADP}
\mathcal{C}(\bm{x}^*(\mu^*)):\;\quad  \mathcal{D}(\mu^*;\bm{y})\;{=}\;\frac{m}{2}\,,
\end{equation}
with $m$ indicating the total number of pixels in the observed image $\bm{y}$.
The ADP is particularly robust from the theoretical viewpoint as it has been proven that the associated discrepancy equation admits a unique solution. Nonetheless, it has also been observed that, as the number of photons hitting the image domain gets smaller, the approximation considered by ADP becomes particularly rough and the principle returns low-quality reconstruction - see, e.g., \cite{nedp}. 

In order to overcome the limitations presented by the ADP, in \cite{nedp} the authors proposed the Nearly Exact DP (NEDP), which is a novel version of DP based on a more accurate approximation - both in low-counting and mid/high-counting regimes - of the expected value of the KL divergence term. In the NEDP, the discrepancy value $\Delta$ is replaced by a weighted least-square fitting of Montecarlo realizations of the target expected value and it is regarded as a function $f$ of the regularization parameter $\mu$; in formula:
\begin{equation}\label{eq:NEDP}
\tag{NEDP}
\mathcal{C}(\bm{x}^*(\mu^*)):\;\quad  \mathcal{D}(\mu;\bm{y})\;{=}\;f(\mu)\,.
\end{equation}
Despite its very good experimental performances, the NEDP is characterized by theoretical limitations which are mostly related to the lack of guarantees on the uniqueness of the solution for the discrepancy equation; such limitations are also combined with the empirical evidence of multiple solutions in very extreme scenarios where the number of zero-pixels in the acquired data is particularly relevant.

The general formulation of minimization-based principles is
\begin{equation}
\mathcal{C}(\bm{x}^*(\mu^*))\;:\;\quad \mu^*\in\argmin_{\mu\in\R_{++}} \mathcal{V}(\mu)\,,\quad \mathcal{V}:\R_{++}\to\R
\end{equation}
where $\mathcal{V}$ represents some demerit function to be minimized for selecting $\mu$.


For Poisson data, this class of strategies has not been explored as much as the former. A few decades ago, some attempts have been made in order to adapt the popular Generalized Cross Validation (GCV) approach \cite{gcv} to non-Gaussian data \cite{GCVP1,GCVP2}; nonetheless, these strategies, which ultimately rely on a weighted approximation of the KL fidelity term and on a slight reformulation of the classical GCV score, have not been diffusively employed for imaging problems.

Among the parameter selection strategies that have been developed in the context of additive white noise corruption, the class of minimization-based principles exploiting the noise \emph{whiteness} property is one of the best performing \cite{hansenNCP,MF_whiteness,etna,SSVM_whiteness2021,JMIV_whiteness,BIT}. More specifically, one selects $\mu$ by minimizing the correlation between the residual image components, that is by guaranteeing that the residual image resembles as much as possible the underlying additive noise in terms of whiteness. The whiteness-based approaches have been proven to outperform the Morozov discrepancy principle in different imaging tasks, such as, e.g., denoising/restoration \cite{etna} and super-resolution \cite{SSVM_whiteness2021,JMIV_whiteness}. Nonetheless, despite the encouraging results on Gaussian data, so far the whiteness principle has not been extended to Poisson noise corruption. In this work, we are going to address such extension.

\subsection{Contribution}

The main contribution of this paper is to provide the first extension of the whiteness principle proposed for additive white noise to the case of Poisson noise. In particular, we will illustrate theoretically how our proposal simply relies on applying the standard whiteness principle to a suitably standardized version of the Poisson-corrupted observation and that it can be used to select the regularization parameter $\mu$ in any variational model of the $\mathcal{R}$-KL class. 

In this work, we apply the proposed selection strategy to the very popular TV-KL model
\begin{equation}
\label{eq:TVKL}
\bm{x}^*(\mu)\;{\in}\;\argmin_{\bm{x}\,\in\,\R_+^n}\{\,\mathrm{TV}(\bm{x}) \;{+}\; \mu\, \mathrm{KL}(\bm{\lambda};\bm{y})\,\}\,,\quad \bm{\lambda}=\bm{g}(\bm{\mathrm{H}x})+\bm{b}\,,
\end{equation}
employed for the image restoration (IR) and X-rays CT image reconstruction (CTIR) tasks. In the two application scenarios, the matrix $\bm{\mathrm{H}} \in \R^{m \times n}$ and the vectorial function $\bm{g}(\bm{h}) = \left(g(h_1),\ldots,g(h_m)\right)^T$ in \eqref{eq:TVKL} are specified as follows: 
\begin{equation}
\begin{array}{rll}
\mathrm{IR:}\;&\bm{\mathrm{H}} \in \R^{n\times n} \;\;\, \text{blurring matrix},   & g(h_i) = h_i, 
\\
\mathrm{CTIR:}\;&\bm{\mathrm{H}} \in \R^{m\times n} \;\: \text{Radon matrix},   & g(h_i) = I_0\,e^{-h_i}\!,\;\: I_0\in\R_{++}\,.
\end{array}
\label{eq:IRCTIR}
\end{equation}

From the numerical optimization viewpoint, in both scenarios the TV-KL model \eqref{eq:TVKL} will be solved by means of a two-blocks ADMM approach, which for the CTIR problem will be adopted in a semi-linearized version so as to significantly decrease the per-iteration computational cost. 

Experimental tests will show that in most cases the proposed selection principle outperforms the aforementioned ADP and NEDP and returns output images characterized by quality measures which are close to the ones achievable by manually tuning the regularization parameter $\mu$.

\medskip

The paper is organized as follows. In Section \ref{sec:prel} we set the notations and recall some preliminary results on white random processes. The proposed selection strategy is introduced in Section \ref{sec:PWP}, while in Section \ref{sec:admm} we outline the numerical scheme employed for the solution of model \eqref{eq:TVKL}. In Section \ref{sec:test} we extensively test the newly introduced strategy on IR and CTIR problems. Finally, we draw some conclusions and  provide an outlook for future research in Section \ref{sec:concl}.

\section{Notations and Preliminaries}
\label{sec:prel}
In this paper, scalars, vectors and matrices are denoted, e.g., by $x$, $\bm{x} = \left\{x_i\right\}$ and $\bm{\mathrm{X}} = \left\{x_{i,j}\right\}$ , respectively, whereas scalar random variables and random matrices (also referred to as random fields) are indicated by $X$ and $\bm{\mathcal{X}} = \left\{X_{i,j}\right\}$, respectively.
We indicate by $\mathrm{E}[X]$, $\mathrm{Var}[X]$, $\mathrm{Corr}[X,Y] = \mathrm{E}[XY]$ and $\mathrm{P}_X$ the expected value (or mean) of random variable $X$, the variance of $X$, the correlation between random variables $X$ and $Y$ and the probability mass function of (discrete) random variable $X$, respectively.
We denote by $\bm{0}_d$, $\bm{\I}_d$ and $\iota_{\mathrm{S}}$ the $d$-dimensional null (column) vector, the identity matrix of order $d$ and the indicator function of set ${\mathrm{S}}$, respectively, with $\iota_{\mathrm{S}}(\bm{x}) = 0$ for $\bm{x} \in {\mathrm{S}}$ and $\iota_{\mathrm{S}}(\bm{x}) = +\infty$ for $\bm{x} \notin {\mathrm{S}}$. We indicate respectively by $(\,\cdot\,,\,\cdot\,)$ and $(\,\cdot\,;\,\cdot\,)$ the concatenation by rows and by columns of scalars, vectors and matrices. Finally, $\| \,\cdot\,\|_2$ denotes the vector Euclidean norm or the matrix Frobenius norm, depending on the context

In order to introduce the theory underlying our proposal, it is useful to rewrite the vectorized image formation model  \eqref{eq:formod} in its equivalent matrix form. 
Denoting by $\,\bm{\mathrm{Y}}, \overline{\bm{\Lambda}} \in \R^{m_1 \times m_2}$ and $\bm{\mathrm{B}},\overline{\mathrm{\bm{X}}} \in \R^{n_1 \times n_2}$ the matrix forms of vectors $\,\bm{y},\overline{\bm{\lambda}}\in\R^m$ and $\bm{b},\overline{\bm{x}}\in\R^n$, respectively, it reads
\begin{equation}
\bm{\mathrm{Y}} \;\,{=}\,\; \bm{\mathrm{POISS}}
\left(\overline{\bm{\Lambda}}\right)\,, \quad\;
\overline{\bm{\Lambda}} = \bm{\mathrm{G}}\left(\bm{\mathrm{H}}\left(\overline{\bm{X}}\right)\right) + \bm{\mathrm{B}}\,,
\label{eq:formodmat}
\end{equation}
where, with a little abuse of notation, $\bm{\mathrm{H}}: \R^{n_1 \times n_1} \to \R^{m_1 \times m_1}$ indicates here the linear operator encoded by matrix $\bm{\mathrm{H}} \in \R^{m \times n}$ in the vectorized model \eqref{eq:formod}, and where $\bm{\mathrm{POISS}}\left(\,\overline{\bm{\mathrm{\Lambda}}}\,\right) = \left\{\mathrm{poiss}\left(\,\overline{\lambda}_{i,j}\right)\right\}$ and $\bm{\mathrm{G}}\left(\bm{\mathrm{H}}\left(\overline{\bm{X}}\right)\right)= \left\{g\left(\left(\bm{\mathrm{H}}\left(\overline{\bm{X}}\right)\right)_{i,j}\right)\right\}$, i.e. the matrix forms of vectors $\bm{\mathrm{poiss}}
\left(\,\overline{\bm{\lambda}}\,\right)$ and $\bm{g}(\bm{\mathrm{H} }\bar{\bm{x}})$ in \eqref{eq:formod}.

We now recall the definitions of weak stationary random field, ensemble normalized auto-correlation, sample normalized auto-correlation and, of particular importance for our purposes, white random field. To shorten notations in the definitions, we preliminarily define the following two sets of integer index pairs 
\begin{equation}
\begin{array}{lcl}
\mathrm{I} & \!\!\!{:=}\!\!\!\! & \left\{(i,j)\:\, \in \mathbb{Z}^2\!: \;(i,j) \:\,\in [1,m_1] \times [1,m_2] \,\right\},
\\
\mathrm{L} & \!\!\!{:=}\!\!\!\! & \left\{(l,m) \in \mathbb{Z}^2\!: \; (l,m) \in \left[-(m_1-1),(m_1-1)\right] \times \left[-(m_2-1),(m_2-1)\right] \,\right\}.
\end{array}
\end{equation}

\smallskip

\begin{definition}[weak stationary random field]
	\label{def:WSRF}
	A $m_1 \times m_2$ random field $\boldsymbol{\mathcal{Z}} = \left\{ Z_{i,j}\right\}$, $(i,j) \in \mathrm{I}$, is said to be weak stationary if
	\begin{equation}
	\begin{array}{l}
	\bullet\;\:\mathrm{E}\left[Z_{i,j}\right] \;{=}\; \mu_{\bm{\mathcal{Z}}} \in \R \,, \;\; \mathrm{Var}\left[Z_{i,j}\right] \;{=}\; \sigma^2_{\bm{\mathcal{Z}}} \in \R_{++}, \;\; \forall\,(i,j) \in \mathrm{I}\,;
	\\
	\bullet\;\:\mathrm{Corr}\left[Z_{i_1,j_1},Z_{i_1+l,j_1+m}\right] \;{=}\; \mathrm{Corr}\left[Z_{i_2,j_2},Z_{i_2+l,j_2+m}\right] \, ,  
	\\
	\;\;\;\;\forall \, (i_1,j_1) \in \mathrm{I}\,, \; \forall \,(i_2,j_2) \in \mathrm{I}\,,\;\forall \, (l,m) \in \mathrm{L}: \: (i_2+l,j_2+m) \in \mathrm{I} \, .
	\end{array}
	\nonumber
	\end{equation}
\end{definition}

\begin{definition}[ensemble normalized auto-correlation]
	\label{def:ENAC}
	The ensemble normalized auto-correlation of a $m_1 \times m_2$ weak stationary random field $\boldsymbol{\mathcal{Z}} = \left\{ Z_{i,j}\right\}$, $(i,j) \in \mathrm{I}$, is a $(2 m_1-1)\times(2 m_2 -1)$ matrix $\bm{\mathrm{A}}\left[\bm{\mathcal{Z}}\right]\;{=}\;\left\{a_{l,m}\left[\bm{\mathcal{Z}}\right]\right\}$, $(l,m) \in \mathrm{L}$, defined by
	\begin{equation}
	a_{l,m}\left[\bm{\mathcal{Z}}\right]\;{=}\; \frac{\mathrm{Corr}\left[Z_{i,j},Z_{i+l,j+m}\right]}{\sigma^2_{\bm{\mathcal{Z}}}}\,, \quad (l,m) \in \mathrm{L}\,, \; (i,j) \in \mathrm{I}: \: (i+l,j+m) \in \mathrm{I} \, .
	\end{equation}
	%
	%
\end{definition}

\begin{definition}[white random field]
	\label{def:WRF}
	A $m_1 \times m_2$ random field $\boldsymbol{\mathcal{Z}} = \left\{ Z_{i,j}\right\}$, $(i,j) \in \mathrm{I}$, is said to be white if
	\begin{equation}
	\begin{array}{l}
	\bullet\;\:\text{it is weak stationary with}\;\;\mu_{\bm{\mathcal{Z}}}\;{=}\; 0 \, ; \\
	\bullet\;\:\text{it is uncorrelated, that is its ensemble normalized autocorrelation}
	\\
	\;\;\;\:\bm{\mathrm{A}}\left[\bm{\mathcal{Z}}\right]\;{=}\;\left\{a_{l,m}\left[\bm{\mathcal{Z}}\right]\right\}, \;(l,m) \in \mathrm{L}, \;\text{satisfies:}
	\vspace{0.1cm}\\ 
	\qquad\qquad\qquad a_{l,m}\left[\bm{\mathcal{Z}}\right]\;{=}\;
	\left\{
	\begin{array}{ll}
	0     & \forall \, (l,m) \in \mathrm{L} \setminus \left\{(0,0)\right\} \, , \\
	1     & \text{if}\;\;(l,m)\;{=}\;(0,0) \, .
	\end{array}
	
	\right.
	\end{array}
	\nonumber
	\end{equation}
\end{definition}

\begin{definition}[sample normalized auto-correlation]
	\label{def:SNAC}
	The sample normalized auto-correlation of a $m_1 \times m_2$ non-zero matrix $\bm{\mathrm{Z}} = \left\{z_{i,j}\right\}$, $(i,j) \in \mathrm{I}$, is a $(2 m_1-1)\times(2 m_2 -1)$ matrix $\bm{\mathrm{S}}\left(\bm{\mathrm{Z}}\right)\;{=}\;\left\{s_{l,m}\left(\bm{\mathrm{Z}}\right)\right\}$, $(l,m) \in \mathrm{L}$, defined by
	\begin{equation}
	s_{l,m}\left(\bm{\mathrm{Z}}\right)\;{=}\; \frac{1}{\left\|\bm{\mathrm{Z}}\right\|_2^2}  \, \sum_{\;(i,j)\in\,\mathrm{I}} \!
	z_{i,j} \, z_{i+l,j+m} \, . 
	\label{eq:NAC_SP}
	\end{equation}
\end{definition}

It follows from Definition \ref{def:SNAC} that, given a non-zero matrix $\bm{\mathrm{Z}} = \left\{z_{i,j}\right\}$, $(i,j) \in \mathrm{I}$, one can measure the global amount of normalized auto-correlation between the entries of $\bm{\mathrm{Z}}$, that is how far is $\bm{\mathrm{Z}}$ from being the realization of a white random field, via the following scalar whiteness measure (\cite{etna,MF_whiteness}):
\begin{equation}
\mathcal{W}(\bm{\mathrm{Z}}) \;{:=}\; \left\|\bm{\mathrm{S}}\left(\bm{\mathrm{Z}}\right)\right\|_2^2\;{=}\;\sum_{(l,m)\in\,\mathrm{L}} \!\!\left(s_{l,m}\left(\bm{\mathrm{Z}}\right)\right)^2 \, ,
\label{eq:W}
\end{equation}
with scalars $s_{l,m}\left(\bm{\mathrm{Z}}\right)$ defined in \eqref{eq:NAC_SP}.

\section{The proposed whiteness principle for Poisson noise}
\label{sec:PWP}
In this section, we show how the residual whiteness principle proposed for additive white noise can be quite easily extended to the case of Poisson noise based on suitable random variable standardizations.

For this purpose, first in Definition \ref{def:PRVF} we recall the formal definition of Poisson random variable and Poisson independent random field, then in Definition \ref{def:SPRVF} we introduce their standard(ized) versions, whose main properties are finally highlighted in Proposition \ref{prop:SPRVF}.
\begin{definition}[Poisson random variable and independent random field]
	\label{def:PRVF}
	A discrete random variable $Y$ is said to be Poisson distributed with parameter $\lambda \:{\in}\: \R_{++}$, denoted by $Y \:{\sim}\: \mathcal{P}(\lambda)$, if its probability mass function reads
	\begin{equation}
	\mathrm{P}_Y(y\mid\lambda) \,\;{=}\; \frac{\lambda^y e^{-\lambda}}{y\,!} \, , \quad y \in \mathbb{N}\,.
	\label{eq:Y_PMF}
	\end{equation}
	The expected value and variance of random variable $Y$ are given by 
	\begin{equation}
	\mathrm{E}\left[Y\right] \,\;{=}\;\, \mathrm{Var}\left[Y\right] \,\;{=}\;\,\lambda \, .
	\end{equation} 
	A random field $\boldsymbol{\mathcal{Y}} = \left\{Y_{i,j}\right\}$ is said to be independent Poisson distributed with parameter $\bm{\Lambda} = \left\{\lambda_{i,j}\right\}$, denoted by $\boldsymbol{\mathcal{Y}} \sim \boldsymbol{\mathcal{P}}(\bm{\Lambda})$, if it satisfies:
	\begin{equation}
	Y_{i,j} \;{\sim}\; \mathcal{P}\left(\lambda_{i,j}\right) \;\:\forall \, (i,j) \in \mathrm{I}\,, \quad
	\mathrm{P}_{\boldsymbol{\mathcal{Y}}}(\bm{\mathrm{Y}}\mid \bm{\Lambda}) \;{=}\; \prod_{(i,j)\in\mathrm{I}} \mathrm{P}_{Y_{i,j}}(y_{i,j}\mid\lambda_{i,j}) \, .
	\label{eq:MMM}
	\end{equation}
\end{definition}

\begin{definition}[standard Poisson random variable and independent random field]
	\label{def:SPRVF}
	Let $Y \sim \mathcal{P}(\lambda)$. We call the discrete random variable $Z$ defined by
	\begin{equation}
	Z \,\;{=}\;\, S_{\lambda}(Y) \:\;{:=}\;\: \frac{Y -\mathrm{E}\left[Y\right]}{\sqrt{\mathrm{Var}\left[Y\right]}} \:\;{=}\;\: \frac{Y-\lambda}{\sqrt{\lambda}} \:\;{=}\;\: \frac{1}{\sqrt{\lambda}} \, Y - \sqrt{\lambda} \, ,
	\label{eq:S}
	\end{equation}
	as standard Poisson distributed with parameter $\lambda$, denoted by $\,Z \:{\sim}\: \widetilde{\mathcal{P}}(\lambda)$.
	
	Let $\boldsymbol{\mathcal{Y}} \sim \boldsymbol{\mathcal{P}}(\bm{\Lambda})$. We call the random field defined by 
	\begin{equation}
	\boldsymbol{\mathcal{Z}} \,\;{=}\; \left\{ Z_{i,j}\right\} \,\;\;\;\mathrm{with}\;\;\;\, Z_{i,j} \,\;{=}\;\, S_{\lambda_{i,j}}(Y_{i,j}) \:\;\; \forall \, (i,j) \in \mathrm{I} \, ,
	\label{eq:UUU}
	\end{equation}
	as independent standard Poisson distributed with parameter $\bm{\Lambda}$, denoted by $\boldsymbol{\mathcal{Z}} \sim \widetilde{\boldsymbol{\mathcal{P}}}(\bm{\Lambda})$.
\end{definition}
\begin{proposition}
	\label{prop:SPRVF}
	Let $Z \sim \widetilde{\mathcal{P}}(\lambda)$ and let $S_{\lambda}$ be the standardization function defined in \eqref{eq:S}. Then, the probability mass function, expected value and variance of random variable $Z$ are given by:
	\begin{eqnarray}
	&\mathrm{P}_{Z}\,(z|\lambda) \;{=}\; \displaystyle{\frac{\lambda^{S_{\lambda}^{-1}(z)} \, e^{-\lambda}}{\left(S_{\lambda}^{-1}(z)\right)\,!}}, \;\; z \in \left\{ S_{\lambda}(0),S_{\lambda}(1),\ldots\right\}, \;\;S_{\lambda}^{-1}(z) \,\;{=}\;\, \sqrt{\lambda}\,z+\lambda,&
	\label{eq:Ybar_PMF1}
	\\
	&\mathrm{E}\left[\,Z\,\right] \;{=}\; 0\,, \quad 
	\mathrm{Var}\left[\,Z\,\right] \;{=}\; 1 \, .&
	\label{eq:Ybar_PMF2}
	\end{eqnarray}
	Hence, any independent standard Poisson random field $\boldsymbol{\mathcal{Z}} \sim \widetilde{\boldsymbol{\mathcal{P}}}(\bm{\Lambda})$ is white.
\end{proposition}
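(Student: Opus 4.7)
The plan is to verify the three claims in sequence, each of which reduces to a short computation once the standardization map $S_\lambda$ is understood as a bijective affine change of variables on the support $\mathbb{N}$.

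First I would derive the PMF \eqref{eq:Ybar_PMF1}. Since $S_\lambda(y) = (y-\lambda)/\sqrt{\lambda}$ is strictly increasing and affine in $y$, it maps $\mathbb{N}$ bijectively onto $\{S_\lambda(0), S_\lambda(1), \ldots\}$, so this set is precisely the support of $Z$. A direct inversion gives $S_\lambda^{-1}(z) = \sqrt{\lambda}\, z + \lambda$. For any $z$ in the support, the event $\{Z = z\}$ coincides with the event $\{Y = S_\lambda^{-1}(z)\}$, hence $\mathrm{P}_Z(z\mid\lambda) = \mathrm{P}_Y(S_\lambda^{-1}(z)\mid\lambda)$; substituting into \eqref{eq:Y_PMF} yields the claimed expression.

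Next I would obtain \eqref{eq:Ybar_PMF2} by a one-line calculation using linearity of expectation and the scaling law for variance applied to $Z = (1/\sqrt{\lambda})\,Y - \sqrt{\lambda}$, together with $\mathrm{E}[Y] = \mathrm{Var}[Y] = \lambda$. This gives $\mathrm{E}[Z] = 0$ and $\mathrm{Var}[Z] = 1$ immediately.

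For the whiteness claim, let $\boldsymbol{\mathcal{Z}} \sim \widetilde{\boldsymbol{\mathcal{P}}}(\bm{\Lambda})$. I would check the three requirements of Definition \ref{def:WRF} in turn. The preceding step gives $\mathrm{E}[Z_{i,j}] = 0$ and $\mathrm{Var}[Z_{i,j}] = 1$ for every $(i,j) \in \mathrm{I}$, so the per-pixel conditions of Definition \ref{def:WSRF} hold with $\mu_{\boldsymbol{\mathcal{Z}}} = 0$ and $\sigma_{\boldsymbol{\mathcal{Z}}}^2 = 1$. Because each $Z_{i,j} = S_{\lambda_{i,j}}(Y_{i,j})$ is a measurable function of the single variable $Y_{i,j}$ and the family $\{Y_{i,j}\}$ is independent by \eqref{eq:MMM}, the family $\{Z_{i,j}\}$ is independent as well. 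Hence for any $(l,m) \in \mathrm{L}\setminus\{(0,0)\}$ and admissible $(i,j)$,
\begin{equation}
\mathrm{Corr}[Z_{i,j},Z_{i+l,j+m}] \;{=}\; \mathrm{E}[Z_{i,j}\,Z_{i+l,j+m}] \;{=}\; \mathrm{E}[Z_{i,j}]\,\mathrm{E}[Z_{i+l,j+m}] \;{=}\; 0,
\nonumber
\end{equation}
independently of $(i,j)$, which both completes weak stationarity and, after dividing by $\sigma^2_{\boldsymbol{\mathcal{Z}}} = 1$, gives $a_{l,m}[\boldsymbol{\mathcal{Z}}] = 0$ for $(l,m) \neq (0,0)$; the case $(l,m) = (0,0)$ is handled by $\mathrm{Corr}[Z_{i,j},Z_{i,j}] = \mathrm{E}[Z_{i,j}^2] = \mathrm{Var}[Z_{i,j}] = 1$.

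None of the steps poses a real difficulty; the only point that deserves care is making the pass from independence of the $Y_{i,j}$'s to independence of the $Z_{i,j}$'s explicit, since this is precisely what drives the off-diagonal correlations to zero and therefore delivers whiteness in the sense of Definition \ref{def:WRF}.
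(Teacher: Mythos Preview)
Your proposal is correct and follows essentially the same approach as the paper: derive the PMF via the bijectivity of the affine map $S_\lambda$, obtain zero mean and unit variance directly from the standardization, then pass independence from the $Y_{i,j}$'s to the $Z_{i,j}$'s and use that independence implies uncorrelation to verify weak stationarity and the required autocorrelation structure of Definition~\ref{def:WRF}. The only difference is cosmetic---you write out the factorization $\mathrm{E}[Z_{i,j}Z_{i+l,j+m}] = \mathrm{E}[Z_{i,j}]\,\mathrm{E}[Z_{i+l,j+m}]$ explicitly, whereas the paper simply invokes ``independence implies uncorrelation''---but the logic is identical.
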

\begin{proof}
	The scalar affine standardization function $S_{\lambda}: \mathbb{N} \to \{S_{\lambda}(0),S_{\lambda}(1),\ldots\}$ in \eqref{eq:S} is bijective (as $\lambda \in \R_{++}$), hence it admits the inverse $S_{\lambda}^{-1}$ defined in \eqref{eq:Ybar_PMF1}. The expression of $\mathrm{P}_Z$ in \eqref{eq:Ybar_PMF1} thus comes from specifying the general form of the probability mass function of a discrete random variable defined by a bijective function of another discrete random variable. The fact that $Z$ has zero-mean and unit-variance - as stated in \eqref{eq:Ybar_PMF2} - comes immediately from the definition of $S_{\lambda}$ in \eqref{eq:S}.
	
	It thus follows from the definition of a standard Poisson independent random field $\boldsymbol{\mathcal{Z}} \,\;{=}\; \left\{ Z_{i,j}\right\}$ given in \eqref{eq:UUU} and from statement \eqref{eq:Ybar_PMF2} that:
	\begin{equation}
	Z_{i,j} \sim \widetilde{\mathcal{P}}\left(\lambda_{i,j}\right) \,\;{\Longrightarrow}\;\,
	\left\{\!\!
	\begin{array}{rcl}
	\mathrm{E}\left[Z_{i,j}\right] &\!\!\!\!{=}\!\!\!\!& \mu_{\bm{\mathcal{Z}}} = 0 \\
	\mathrm{Var}\left[Z_{i,j}\right]     &\!\!\!\!{=}\!\!\!\!& \sigma^2_{\bm{\mathcal{Z}}} = 1
	\end{array}
	\right. , \;
	\forall\,(i,j) \, .
	\label{eq:OOO}
	\end{equation}
	
	Moreover, it clearly comes from independence of a non-standard Poisson random field $\bm{\mathcal{Y}}$ - formalized in \eqref{eq:MMM} - and from the entry-wise definition of random field standardization in \eqref{eq:UUU} that independence also holds true for a standard Poisson random field $\boldsymbol{\mathcal{Z}}$; in formula:
	\begin{equation}
	\mathrm{P}_{\boldsymbol{\mathcal{Z}}}(\bm{\mathrm{Z}}\mid \bm{\Lambda}) \;{=}\; \prod_{(i,j)} \mathrm{P}_{Z_{i,j}}(z_{i,j}\mid\lambda_{i,j}) \, .
	\end{equation}
	Since independence implies uncorrelation and based on \eqref{eq:OOO}, we have
	\begin{equation}
	\mathrm{Corr}\left[Z_{i_1,j_1},Z_{i_2,j_2}\right] \;{=}\; \left\{
	\begin{array}{ll}
	0 & \text{for}\;\: (i_1,j_1) \neq (i_2,j_2)\,, 
	\\
	\mathrm{Var}\left[ Z_{i_1,j_1}\right] \;{=}\; \sigma^2_{\bm{\mathcal{Z}}} \;{=}\; 1 & \text{for}\;\: (i_1,j_1) = (i_2,j_2) \,.  
	\end{array}
	\right.
	\label{eq:CCC}
	\end{equation}
	It follows from \eqref{eq:OOO}, \eqref{eq:CCC} and from Definition \ref{def:WSRF} that $\bm{\mathcal{Z}}$ is a weak stationary random field. Then, it comes from \eqref{eq:CCC} and from Definition \ref{def:ENAC} that the ensemble normalized auto-correlation $\bm{\mathrm{A}}\left[\bm{\mathcal{Z}}\right]\;{=}\;\left\{a_{l,m}\left[\bm{\mathcal{Z}}\right]\right\}$ satisfies
	\begin{equation}
	a_{l,m}\left[\bm{\mathcal{Z}}\right]\;{=}\;
	\left\{
	\begin{array}{ll}
	0     & \text{for}\;\:(l,m) \neq (0,0) \, , \\
	1     & \text{for}\;\:(l,m) = (0,0) \, .
	\end{array}
	\right.
	\label{eq:EEE}
	\end{equation}
	Hence, based on Definition \ref{def:WRF}, we can conclude that any standard Poisson independent random field $\bm{\mathcal{Z}}$ is white.
\end{proof}


In light of Definition \ref{def:PRVF}, the image formation model \eqref{eq:formodmat} can be written in probabilistic terms as follows:
\begin{equation}
\label{eq:formodmatp}
\bm{\mathrm{Y}}\;\;\text{realization of}\;\;
\boldsymbol{\mathcal{Y}} \sim \boldsymbol{\mathcal{P}}\left(\,\overline{\boldsymbol{\Lambda}}\,\right)\,,
\end{equation}
with matrix $\overline{\bm{\Lambda}}$ defined in \eqref{eq:formodmat}.

Then, based on Definition \ref{def:SPRVF}, after introducing the matrix
\begin{equation}
\label{eq:Z}
\bm{\mathrm{Z}} \,\;{=}\;\, \left\{z_{i,j}\right\} \;\;\mathrm{with}\;\; z_{i,j}\,\;{=}\;\,S_{\,\overline{\lambda}_{i,j}}\!\left(y_{i,j}\right)\,\;{=}\;\,\frac{y_{i,j}-\overline{\lambda}_{i,j}}{\sqrt{\,\overline{\lambda}_{i,j}}} \, ,  
\end{equation}
the probabilistic model \eqref{eq:formodmatp} can be equivalently written in standardized form as
\begin{equation}
\label{eq:formodmatpn}
\bm{\mathrm{Z}}\;\;\text{realization of}\;\;
\boldsymbol{\mathcal{Z}} \sim \widetilde{\boldsymbol{\mathcal{P}}}\left(\,\overline{\boldsymbol{\Lambda}}\,\right) \,.
\end{equation}
That is, matrix $\bm{\mathrm{Z}}$  in \eqref{eq:Z} with $\overline{\bm{\Lambda}}$ in \eqref{eq:formodmat} is the realization of an independent standard Poisson random field $\boldsymbol{\mathcal{Z}}$ which, according to Proposition \ref{prop:SPRVF}, is white.

We note that $\bm{\mathrm{Z}}$ can not be computed in practice as it depends on $\overline{\bm{\Lambda}}$ which, in its turn, depends on the unknown uncorrupted image $\overline{\bm{\mathrm{X}}}$.
However, the whiteness property of $\bm{\mathrm{Z}}$ can be exploited for stating a new principle for automatically selecting the value of the regularization parameter $\mu$ in the class of $\mathcal{R}$-KL variational models.

Denoting by $\,\bm{\mathrm{X}}^*(\mu) = \left\{x_{i,j}^*(\mu)\right\}\,$ the matrix form of the solution of a $\mathcal{R}$-KL model - e.g., of the TV-KL model in \eqref{eq:TVKL} - we introduce the $\mu$-dependent matrices $\bm{\Lambda}^*(\mu), \bm{\mathrm{Z}}^*(\mu) \in \R^{m_1 \times m_2}$ given by
\begin{eqnarray}
\bm{\Lambda}^*(\mu) &\!\!\!{=}\!\!\!&
\left\{ \lambda_{i,j}^*(\mu)\right\} \,\;{=}\;\, \bm{\mathrm{G}}\left(\bm{\mathrm{H}}\left(\bm{\mathrm{X}}^*(\mu)\right)\right) + \bm{\mathrm{B}},
\label{eq:Lstar}
\\
\bm{\mathrm{Z}}^*(\mu) &\!\!\!{=}\!\!\!& \left\{ z_{i,j}^*(\mu)\right\} \,\;\;\mathrm{with}\;\;\, z_{i,j}^*(\mu) \,\;{=}\;\,S_{\,\lambda_{i,j}^*(\mu)}\!\left(y_{i,j}\right) \,\;{=}\;\, \frac{y_{i,j}-\lambda_{i,j}^*(\mu)}{\sqrt{\lambda_{i,j}^*(\mu)}},
\label{eq:Zstar}
\end{eqnarray}
The ideal goal of any criterion for choosing $\mu$ in the TV-KL model is to select the value $\mu^*$ yielding the closest solution image $\bm{\mathrm{X}}^*(\mu^*)$ to the target uncorrupted image $\overline{\bm{\mathrm{X}}}$, according to some distance metric.
The conjecture behind our proposal is that the closer the solution  $\bm{\mathrm{X}}^*(\mu)$ is to the target $\overline{\bm{\mathrm{X}}}$, the closer the matrix $\bm{\mathrm{Z}}^*(\mu)$ defined in (\ref{eq:Lstar})-(\ref{eq:Zstar}) will be to $\bm{\mathrm{Z}}$ in \eqref{eq:Z}, so the more $\bm{\mathrm{Z}}^*(\mu)$ will resemble the realization of a white random field. Hence, the proposed criterion, that we refer to as the Poisson Whiteness Principle (PWP), consists in choosing the value of $\mu$ which leads to the less auto-correlated matrix $\bm{\mathrm{Z}}^*(\mu)$. Based on the scalar auto-correlation measure introduced in \eqref{eq:W}, the proposed PWP reads:
\begin{equation}
\boxed{
	\begin{array}{c}
	\text{Select}\;\;\mu\;{=}\;\mu^* \:{\in}\; 
	\displaystyle{\argmin_{\mu \in \R_{++}} 
		\left\{\, W(\mu) \,\;{:=}\;\, \mathcal{W}\left(\bm{\mathrm{Z}}^*(\mu)\right)\,\right\}} \, , 
	\vspace{0.15cm}\\
	\text{with matrix $\bm{\mathrm{Z}}^*(\mu)$ defined in \eqref{eq:Lstar}-\eqref{eq:Zstar} and function $\mathcal{W}$ in \eqref{eq:W}.}
	\end{array}
}
\label{eq:PWP}
\tag{PWP}
\end{equation}

\section{Numerical solution by ADMM}
\label{sec:admm}

In this section, we address the numerical solution of the TV-KL model \eqref{eq:TVKL} for the IR and CTIR imaging problems, that is for matrix $\bm{\mathrm{H}}$ and function $\bm{g}$ defined as in \eqref{eq:IRCTIR}.

Recalling the definition of TV in \eqref{eq:TV} and introducing the discrete gradient matrix $\bm{\D} := (\bm{\D}_h;\bm{\D}_v) \in \R^{2n \times n}$ with $\bm{\D}_h,\bm{\D}_v \in \R^{n \times n}$ two finite difference matrices discretizing the first-order partial derivatives of image $\bm{x}$ in the horizontal and vertical direction, respectively, we write the TV-KL model \eqref{eq:TVKL} in the following form:
\begin{equation}
\label{eq:subuwunc}
\bm{x}^*\;{\in}\;\argmin_{\bm{x} \in \R^n}\left\{ \sum_{i=1}^n\|(\bm{\D x})_i\|_2 \;{+}\; \mu\,\mathrm{KL}\left(\bm{g}(\bm{\mathrm{H}x}\right)+\bm{b};\bm{y}) \;{+}\; \iota_{\R_+^n}(\bm{x})\right\}\, ,
\end{equation}
where, with a little abuse of notation,  $(\bm{\D x})_i := \left( \left(\bm{\D}_h \bm{x}\right)_i \, ; \, \left(\bm{\D}_v \bm{x}\right)_i \right) \in \R^2$, the discrete gradient of image $\bm{x}$ at pixel $i$.

By introducing the auxiliary variables $\bm{t}_1\;{=}\;\bm{\D x} \in \R^{2n}$, $\bm{t}_2\;{=}\;\bm{\mathrm{H} x} \in \R^m$ and $\bm{t}_3\;{=}\;\bm{x} \in \R^n$, problem~\eqref{eq:subuwunc} can be equivalently rewritten in the following linearly constrained form:
\begin{equation}
\begin{array}{lcl}
\left\{\bm{x}^*\!,\bm{t}_1^*,\bm{t}_2^*,\bm{t}_3^*\right\} & \!\!\!\!{\in}\!\!\!\! &  \displaystyle{\argmin_{\bm{x},\bm{t}_1,\bm{t}_2,\bm{t}_3}\left\{
	\sum_{i=1}^n\|\bm{t}_{1,i}\|_2 + \mu\,\mathrm{KL}\left(\bm{g}(\bm{t}_2)+\bm{b};\bm{y}\right)+\iota_{\R_+^n}(\bm{t}_3)
	\!\right\}}
\\
&  \!\!\!\!\!\!\!\! & 
\text{subject to:}\;\;
\bm{t}_1\;{=}\;\bm{\D x},\;\; \bm{t}_2\;{=}\;\bm{\mathrm{H} x},\;\;
\bm{t}_3\;{=}\;\bm{x},
\end{array}
\label{eq:TGVKLcon}
\end{equation}
where $\bm{t}_{1,i} := (\bm{\D x})_i \in \R^2$.

It is easy to prove - see, e.g., \cite{DLV_dtgv} - that, after introducing the total auxiliary variable $\,\bm{t}:=(\bm{t}_1;\bm{t}_2;\bm{t}_3) \in \R^{m+3n}$, problem~\eqref{eq:TGVKLcon} takes the form:
\begin{equation}
\begin{array}{lcl}
\left\{\bm{x}^*,\bm{t}^*\right\} & \!\!{\in}\!\! & \displaystyle{\argmin_{\bm{x},\bm{t}}\left\{\,C_1(\bm{x}) + C_2(\bm{t})\,\right\}}
\\
&\!\!\!\!& \text{subject to:}\;\;
\bm{\mathrm{M}}_1 \bm{x}+\bm{\mathrm{M}}_2\bm{t}=\bm{0},     
\end{array}
\label{eq:modelnew}
\end{equation}
where the two cost functions $C_1: \R^n \to \R$ and $C_2: \R^{m+3n} \to \R$ are defined by
\begin{equation}
C_1(\bm{x})\;{=}\;0, \quad
C_2(\bm{t})\;{=}\; \sum_{i=1}^n\|\bm{t}_{1,i}\|_2 + \mu\, \mathrm{KL}\left(\bm{g}(\bm{t}_2)+\bm{b};\bm{y}\right)+\iota_{\R_+^n}(\bm{t}_3)\,,
\label{eq:F2}
\end{equation}
and the two matrices $\bm{\mathrm{M}}_1 \in \R^{(m+3n) \times n}$ and $\bm{\mathrm{M}}_2 \in \R^{(m+3n) \times (m+3n)}$ read
\begin{equation}
\bm{\mathrm{M}}_1=\left( \bm{\mathrm{D}};\bm{\mathrm{H}};\bm{\I}_n\right)\,,\quad \bm{\mathrm{M}}_2=-\bm{\I}_{m+3n}\,.
\label{eq:AB}
\end{equation}

Functions $C_1$ and $C_2$ in \eqref{eq:F2} are both proper, lower semi-continuous and convex, hence problem (\ref{eq:modelnew})-(\ref{eq:AB}) is a standard  two-blocks separable optimization problem which can be solved by ADMM~\cite{admm}.

The augmented Lagrangian function associated to problem \eqref{eq:modelnew} reads
\begin{eqnarray}
\mathcal{L}(\bm{x},\bm{t},\bm{\rho};\beta) &\!\!\!\!{=}\!\!\!\!& C_1(\bm{x})+C_2(\bm{t})+\langle\bm{\rho},\bm{\mathrm{M}}_1\bm{x}+\bm{\mathrm{M}}_2\bm{t}\rangle+\frac{\beta}{2}\|\bm{\mathrm{M}}_1\bm{x}+\bm{\mathrm{M}}_2\bm{t}\|_2^2,
\label{eq:AL} 
\end{eqnarray}
where $\bm{\rho}\in\R^{m+3n}$ 
is the vector of Lagrange multipliers associated to the linear constraint in~\eqref{eq:modelnew} and $\beta\in\R_{++}$ is the ADMM penalty parameter.

Solving problem \eqref{eq:modelnew} amounts to seek the saddle point(s) of the augmented Lagrangian function which, according to the standard two-blocks ADMM, can be computed as the limit point of the following iterative procedure:
\begin{align}
\label{eq:x_ADMM}
\bm{x}^{(k+1)}\:\;{\in}\;\:&\argmin_{\bm{x}\,\in\,\R^{n}}\mathcal{L}(\bm{x},\bm{t}^{(k)},\bm{\rho}^{(k)};\beta)\\
\label{eq:z_ADMM}
\bm{t}^{(k+1)}\:\;{\in}\;\:&\argmin_{\bm{t}\,\in\,\R^{m+3n}}\mathcal{L}(\bm{x}^{(k+1)},\bm{t},\bm{\rho}^{(k)};\beta)\\
\label{eq:getzeta}
\bm{\rho}^{(k+1)}\:\;{=}\;\:&\bm{\rho}^{(k)}\;{+}\; \beta\left(\bm{\mathrm{M}}_1\bm{x}^{(k+1)}+\bm{\mathrm{M}}_2\bm{t}^{(k+1)}\right)\,.
\end{align}
In what follows, we will detail how to solve  \eqref{eq:x_ADMM}-\eqref{eq:z_ADMM} when tackling the IR and CTIR imaging problems.

\subsection{The \textbf{x}-subproblem}

Recalling the definition of the augmented Lagrangian function $\mathcal{L}$ in \eqref{eq:AL}, with functions $C_1,C_2$ in \eqref{eq:F2} and matrices $\bm{\mathrm{M}}_1, \bm{\mathrm{M}}_2$ in \eqref{eq:AB}, after dropping the constant terms the $\bm{x}$-update problem in~\eqref{eq:x_ADMM} reads
\begin{eqnarray}
\!\!\!\!\!\!\bm{x}^{(k+1)}&\!\!\!\!{\in}\!\!\!\!&
\argmin_{\bm{x}\,\in\,\R^{n}}\left\{\langle\bm{\rho}^{(k)},\bm{\mathrm{M}}_1\bm{x}-\bm{t}^{(k)}\rangle+\frac{\beta}{2}\|\bm{\mathrm{M}}_1\bm{x}-\bm{t}^{(k)}\|_2^2\right\}
\nonumber\\
&\!\!\!\!{=}\!\!\!\!&\argmin_{\bm{x}\,\in\,\R^{n}}\left\{Q^{(k)}(\bm{x}) := \frac{1}{2}\|\bm{\mathrm{M}}_1\bm{x}\;{-}\;\bm{v}^{(k)}\|_2^2\right\},\;\: \bm{v}^{(k)}\;{=}\;\bm{t}^{(k)}-\frac{1}{\beta}\,\bm{\rho}^{(k)}.
\label{eq:subx1}
\end{eqnarray}
Since the cost function $Q^{(k)}$ in \eqref{eq:subx1} is quadratic and convex, it admits global minimizers which are the solutions of the linear system of normal equations:
\begin{equation}
\bm{\mathrm{M}}_1^{\mathrm{T}}  \bm{\mathrm{M}}_1 \, \bm{x}^{(k+1)} \;{=}\; \bm{\mathrm{M}}_1^{\mathrm{T}} \bm{v}^{(k)}
\;\;{\Longleftrightarrow}\;\; \left(\bm{\mathrm{D}}^{\mathrm{T}}\bm{\mathrm{D}} + \bm{\mathrm{H}}^{\mathrm{T}}\bm{\mathrm{H}} + \bm{\mathrm{I}}_n \right) \bm{x}^{(k+1)} \;{=}\; \bm{\mathrm{M}}_1^{\mathrm{T}} \bm{v}^{(k)}.
\label{eq:subx2}
\end{equation}
The coefficient matrix in \eqref{eq:subx2} has full rank independently of matrices $\bm{\D}$ and $\bm{\mathrm{H}}$ - i.e., of the finite difference discretization used for the gradient and of the imaging application considered - hence the solution of \eqref{eq:subx1} is unique and reads
\begin{equation}
\bm{x}^{(k+1)} =  \left(\bm{\mathrm{M}}_1^{\mathrm{T}} \bm{\mathrm{M}}_1\right)^{-1} \bm{\mathrm{M}}_1^{\mathrm{T}} \bm{v}^{(k)} \, .
\end{equation} 
For the IR inverse problem, upon the assumption of space-invariant blur and periodic boundary conditions, the coefficient matrix in \eqref{eq:subx2} is block-circulant with circulant blocks. Hence, the above linear system can be solved very efficiently by one application of the 2D Fast Fourier Transform (FFT) and one application of the inverse 2D FFT.

When addressing the CTIR problem, the structure of matrix $\bm{\mathrm{H}}$ - which, we recall, in this case is a Radon matrix - does not allow for a Fourier diagonalization of matrix $\bm{\mathrm{M}}_1^{\mathrm{T}} \bm{\mathrm{M}}_1$, thus yielding a significative computational burden related to the solution of linear system \eqref{eq:subx2}. A popular strategy for avoiding such difficulty is the linearized ADMM. It relies on computing $\bm{x}^{(k+1)}$ as the global minimizer of a surrogate function $\widehat{Q}^{(k)}$ of $Q^{(k)}$ in \eqref{eq:subx1}, namely
\begin{equation}
\bm{x}^{(k+1)} \;{=}\; \argmin_{\bm{x}\,\in\,\R^{n}} \,\widehat{Q}^{(k)}(\bm{x}) \, ,
\label{eq:subx7}
\end{equation}
where $\widehat{Q}^{(k)}$ is a quadratic function of the following form
\begin{eqnarray}
\widehat{Q}^{(k)}(\bm{x})&{=}&Q^{(k)}(\bm{x}^{(k)}) + \langle \nabla Q^{(k)}(\bm{x}^{(k)}),\bm{x}-\bm{x}^{(k)}\rangle
\nonumber\\
&& + \frac{\eta}{2} \| \bm{x}-\bm{x}^{(k)} \|_2^2, \quad \eta \;{\geq}\; \| \bm{\mathrm{M}}_1 \|_2^2 \, .
\label{eq:subx8}
\end{eqnarray}
It can be easily proved that any function $\widehat{Q}^{(k)}$ in \eqref{eq:subx8} is a \emph{quadratic tangent majorant} of the original function $Q^{(k)}$ in \eqref{eq:subx1} at point $\bm{x}^{(k)}$, that is it satisfies
\begin{equation}
\begin{array}{c}
\widehat{Q}^{(k)}(\bm{x}^{(k)}) \;{=}\; Q^{(k)}(\bm{x}^{(k)}), \quad
\nabla \widehat{Q}^{(k)}(\bm{x}^{(k)}) \;{=}\; \nabla Q^{(k)}(\bm{x}^{(k)}), 
\\
\widehat{Q}^{(k)}(\bm{x}) \;{\geq}\; Q^{(k)}(\bm{x}) \;\forall\,\bm{x} \in \R^n \, .
\end{array}
\end{equation}
It follows from (\ref{eq:subx7})-(\ref{eq:subx8}) that the new iterate $\bm{x}^{(k+1)}$ computed by the linearized ADMM is given by
\begin{eqnarray}
\bm{x}^{(k+1)} &\!\!\!\!{=}\!\!\!\!&  \argmin_{\bm{x}\,\in\,\R^{n}} \left\{ \langle \nabla Q^{(k)}(\bm{x}^{(k)}),\bm{x}\rangle + \frac{\eta}{2} \| \bm{x}-\bm{x}^{(k)} \|_2^2\right\}
\label{eq:subx9}\\
&\!\!\!\!{=}\!\!\!\!& 
\bm{x}^{(k)} - \frac{1}{\eta} \nabla Q^{(k)}(\bm{x}^{(k)})
\label{eq:subx10}\\
&\!\!\!\!{=}\!\!\!\!& 
\bm{x}^{(k)} - \frac{1}{\eta} \bm{\mathrm{M}}_1^{\mathrm{T}} \left( \bm{\mathrm{M}}_1 \bm{x}^{(k)}-\bm{v}^{(k)}\right), \quad \eta \;{\geq}\; \| \bm{\mathrm{M}}_1 \|_2^2 \, ,
\label{eq:subx11}
\end{eqnarray}
where in \eqref{eq:subx9} we dropped the constant terms, in \eqref{eq:subx10} we set $\bm{x}^{(k+1)}$ equal to the unique stationary point of the strongly convex cost function in \eqref{eq:subx9} and, finally, in \eqref{eq:subx11} we substituted the explicit expression of the gradient of the original cost function $Q^{(k)}$ defined in \eqref{eq:subx1}.

\subsection{The \textbf{t}-subproblem}

Recalling definitions (\ref{eq:F2})-(\ref{eq:AL}), the $\bm{t}$-subproblem in~\eqref{eq:z_ADMM} reads
\begin{eqnarray}
\bm{t}^{(k+1)}&\!\!\!\!{\in}\!\!\!\!&
\argmin_{\bm{t}\,\in\,\R^{m+3n}}\left\{C_2(\bm{t})+\langle\bm{\rho}^{(k)},\bm{\mathrm{M}}_1\bm{x}^{(k+1)}-\bm{t}\rangle+\frac{\beta}{2}\|\bm{\mathrm{M}}_1\bm{x}^{(k+1)}-\bm{t}\|_2^2\right\}
\nonumber\\
&\!\!\!\!{=}\!\!\!\!&\argmin_{\bm{t}\,\in\,\R^{m+3n}}\left\{C_2(\bm{t})+ \frac{\beta}{2}\|\bm{t}-\bm{q}^{(k)}\|_2^2\right\},\;\; \bm{q}^{(k)}\;{=}\;\bm{\mathrm{M}}_1\bm{x}^{(k+1)}+\frac{1}{\beta}\,\bm{\rho}^{(k)}\!.
\label{eq:subz1}
\end{eqnarray}
Then, by recalling the definition of function $C_2$ in \eqref{eq:F2} and introducing the vectors $\bm{\rho}_1^{(k)} \in \R^{2n}$, $\bm{\rho}_2^{(k)} \in \R^m$ and $\bm{\rho}_3^{(k)} \in \R^n$ such that $\bm{\rho}^{(k)} = \big(\bm{\rho}_1^{(k)};\bm{\rho}_2^{(k)};\bm{\rho}_3^{(k)}\big)$ and the vectors 
\begin{equation}
\begin{array}{c}
\displaystyle{\bm{q}_1^{(k)} =  \bm{\mathrm{D}}\bm{x}^{(k+1)}+\frac{1}{\beta}\bm{\rho}_1^{(k)} \in \R^{2n}, 
	\quad\;
	\bm{q}_2^{(k)} = \bm{\mathrm{H}}\bm{x}^{(k+1)}+\frac{1}{\beta}\bm{\rho}_2^{(k)} \in \R^m,}
\vspace{0.15cm}\\
\displaystyle{\bm{q}_3^{(k)} =  \bm{x}^{(k+1)}+\frac{1}{\beta}\bm{\rho}_3^{(k)} \in \R^n,}
\end{array}
\end{equation}
such that $\bm{q}^{(k)} = \big(\bm{q}_1^{(k)};\bm{q}_2^{(k)};\bm{q}_3^{(k)}\big)$, problem \eqref{eq:subz1} can be equivalently written as
\begin{equation}
\bm{t}^{(k+1)} \;{\in}\;\argmin_{\bm{t}\,\in\,\R^{m+3n}} \left\{\,T_1\left(\bm{t}_1\right) \;{+}\; T_2\left(\bm{t}_2\right) \;{+}\; T_3\left(\bm{t}_3\right) \,\right\} \, , \;\;\text{with:}
\end{equation}
\begin{equation}
\begin{array}{rcrl}
\displaystyle{T_1\left(\bm{t}_1\right)} &\!\!{=}\!\!\!& \displaystyle{\sum_{i=1}^n\|\bm{t}_{1,i}\|_2} &\!\!\!{+}\;\: \displaystyle{\frac{\beta}{2}\,\|\bm{t}_1-\bm{q}_1^{(k)}\|_2^2\,,} 
\vspace{0.0cm}\\
\displaystyle{T_2\left(\bm{t}_2\right)} &\!\!{=}\!\!\!& \displaystyle{\mu\, \mathrm{KL}\left(\bm{g}(\bm{t}_2)+\bm{b};\bm{y}\right)}&\!\!\!{+}\;\: \displaystyle{\frac{\beta}{2}\,\|\bm{t}_2-\bm{q}_2^{(k)}\|_2^2 \, ,}
\vspace{0.15cm}\\
\displaystyle{T_3\left(\bm{t}_3\right)} &\!\!{=}\!\!\!& \displaystyle{\iota_{\R_+^n}(\bm{t}_3)}&\!\!\!{+}\;\: \displaystyle{\frac{\beta}{2}\,\|\bm{t}_3-\bm{q}_3^{(k)}\|_2^2 \, .}
\vspace{0.2cm}
\end{array}
\label{eq:subz2}
\end{equation}
Therefore, the updates of variables $\bm{t}_1$, $\bm{t}_2$ and $\bm{t}_3$ can be addressed separately.

\vspace{0.2cm}\emph{Update of} $\bm{t}_1$. It comes from \eqref{eq:subz2} that the update of $\bm{t}_1$ reads
\begin{equation}
\bm{t}_1^{(k+1)} \;{=}\; \argmin_{\bm{t}_1 \in \R^{2n}}
\left\{ \sum_{i=1}^n 
\left[ \left\|\bm{t}_{1,i}\right\|_2 + 
\frac{\beta}{2}\left( 
\bm{t}_{1,i}-\bm{q}_{1,i}^{(k)}
\right)^2
\right]
\right\}\,.
\label{eq:z2_1}
\end{equation}
Hence, problem \eqref{eq:z2_1} is  separable into $n$ independent $2$-dimensional problems
\begin{equation}
\bm{t}_{1,i}^{(k+1)} 
\,\;{=}\;\,
\argmin_{\bm{t}_{1,i} \,\in\, \R^{2n}}
\left\{ \left\|\bm{t}_{1,i}\right\|_2 + 
\frac{\beta}{2}\left( 
\bm{t}_{1,i}-\bm{q}_{1,i}^{(k)}
\right)^2
\right\} , \quad i = 1,\ldots,n \, ,
\label{eq:LKM}
\end{equation}
which represent the proximal map of the Euclidean norm function $\|\,\cdot\,\|_2$ in $\R^2$ calculated at points $\bm{q}_{1,i}^{(k)}$, $i = 1,\ldots,n$. Such a proximal map admits a well-known explicit expression which leads to the following closed-form solution of problem \eqref{eq:LKM}:
\begin{equation}
\bm{t}_{1,i}^{(k+1)} 
\,\;{=}\;\, 
\max\left\{\,
\left\| \bm{q}_{1,i}^{(k)} \right\|_2 - \frac{1}{\beta} \,\, , \, 0 \, \right\} \, \frac{\bm{q}_{1,i}^{(k)}}{\left\|\bm{q}_{1,i}^{(k)}\right\|_2} , \quad i = 1,\ldots,n \, .
\end{equation}
where $\, 0 \, \cdot \, \bm{0} \,/\,0 \;{=}\; \bm{0}\,$ is assumed.


\vspace{0.2cm}\emph{Update of} $\bm{t}_2$. 
It follows from \eqref{eq:subz2} that, after introducing the scalar $\tau = \mu /\beta$, the updated vector $\bm{t}_2^{(k+1)}$ is given by
\begin{eqnarray}
\bm{t}_2^{(k+1)}&\!\!\!\!{\in}\!\!\!\!& \argmin_{\bm{t}_2\,\in\,\R^m}\left\{\tau\,\mathrm{KL}(\bm{g}(\bm{t}_2)+\bm{b};\bm{y})\;{+}\;\frac{1}{2}\|\bm{t}_2-\bm{q}_2^{(k)}\|_2^2\right\}
\nonumber\\
&\!\!\!\!{=}\!\!\!\!& \argmin_{\bm{t}_2\,\in\,\R^m}\left\{ \sum_{i=1}^m \left[ \tau\, g(t_i) - \tau \, y_i \ln\left(g(t_i)+b_i\right)+\frac{1}{2}\left(t_i-q_i\right)^2\right]\right\},
\label{eq:z1upd}
\end{eqnarray}
where in \eqref{eq:z1upd} we substituted the explicit expression of the KL divergence term reported in \eqref{eq:KL}, we dropped the constants and, for simplicity of notation, we set $t_i:=t_{2,i} \in \R$ and $q_i = q_{2,i}^{(k)} \in \R$. Hence, similarly to the $\bm{t}_1$ update problem in \eqref{eq:z2_1}, the $m$-dimensional minimization problem \eqref{eq:z1upd} is equivalent to the $m$ following $1$-dimensional problems 
\begin{equation}
\label{eq:zupd}
t_i^{(k+1)}\;{=}\;\argmin_{t_i\,\in\,\R}\left\{\tau \,g(t_i) - \tau \, y_i \ln\left(g(t_i)+b_i\right)+\frac{1}{2}\left(t_i-q_i\right)^2\right\},
\end{equation}
$i=1,\ldots,m$.

In the IR scenario, i.e. when $g(t_i)=t_i$, the cost function in \eqref{eq:zupd} is infinitely many times differentiable, strictly convex and coercive in its domain $t_i \in (-b_i,+\infty)$. Hence, the solution $t_i^{(k+1)}$ of \eqref{eq:zupd} exists, is unique and coincides with the unique stationary point of the cost function, given by
\begin{equation}\label{eq:getz}
t_i^{(k+1)} = \frac{1}{2}\left[-(\tau+b_i-q_i)+\sqrt{(\tau+b_i-q_i)^2+4\left(q_i\,b_i+\tau(y_i-b_i)\right)
}\,\right]\,.
\end{equation}

For the CTIR problem, i.e. when $g(t_i) = I_0 e^{-t_i}$, problem \eqref{eq:zupd} reads
\begin{equation}
\label{eq:zupdct}
t_i^{(k+1)}\;{=}\;\argmin_{t_i\,\in\,\R}\left\{\tau\,I_0\, e^{-t_i}-\tau \, y_i \ln\left(I_0\, e^{-t_i}\!+b_i\right)  +\frac{1}{2}(t_i-q_i)^2
\right\}.
\end{equation}
The cost function in \eqref{eq:zupdct} is infinitely many times differentiable and coercive in its domain $t_i \in \R$, hence it admits global minimizers. However, in the general case of a nonzero background, i.e. when $b_i \in \R_{++}$, problem \eqref{eq:zupdct} does not admit a closed-form solution and can only be addressed by employing iterative solvers.

On the other hand, when $b_i=0$ the cost function is also strictly convex, hence $t_i^{(k+1)}$ in \eqref{eq:zupdct} is given by the unique solution of the first-order optimatily condition 
\begin{equation}
-\tau \,I_0\, e^{-t_i} + \tau\, y_i  + t_i-q_i = 0\,.   
\end{equation}
The above nonlinear equation can be manipulated so as to give
\begin{equation}\label{eq:lameq}
w_i\,e^{w_i} =     \tau\,I_0\,e^{\tau\,y_i-q_i}\,,\quad \text{with}\;\; w_i = t_i+\tau\,y_i-q_i\,.
\end{equation}
Equations of the form in \eqref{eq:lameq} admit solutions that can be expressed in closed-form in terms of the so-called Lambert $W$ function \cite{lambert}. In particular, when the right-hand side is non-negative - which is our case as $\tau \, I_0 e^{\tau\,y_i-q_i} \in \R_{++}$ - then the equation admits a unique solution given by
\begin{equation}
w_i = W\left(\tau\,I_0\,e^{\tau\,y_i-q_i}\right) \, .  
\end{equation}
It follows that problem \eqref{eq:zupdct} admits the unique solution
\begin{equation}
t_i^{(k+1)} = -(\tau\,y_i-q_i) + W\left(\tau\,I_0\,e^{\tau\,y_i-q_i}\right) \, .
\end{equation}

\vspace{0.2cm}\emph{Update of} $\bm{t}_3$. It comes from \eqref{eq:subz2} that the  $\bm{t}_3$-update problem reads
\begin{equation}
\bm{t_3}^{(k+1)}\;{\in}\;\argmin_{\bm{t}_3\,\in\,\R_+^n}\,\|\bm{t}_3-\bm{q}_3^{(k)}\|_2^2 \,,
\end{equation}
that is $\bm{t_3}^{(k+1)}$ is given  by the unique Euclidean projection of vector $\bm{q}_3^{(k)}$ onto the non-negative orthant $\R_+^n$, which admits the following component-wise closed-form expression:
\begin{equation}\label{eq:getz4i}
t_{3,i}^{(k+1)}\;{=}\;\max\left\{q_{3,i}^{(k)},0\right\}\,,\quad i=1,\ldots,n\,.
\end{equation}

\section{Computed examples}
\label{sec:test}
In this section, we evaluate the performance of the proposed Poisson Whiteness Principle (\ref{eq:PWP}) for the automatic selection of the regularization parameter $\mu$ in the TV-KL model in \eqref{eq:TVKL} employed for the image restoration and CT image reconstruction tasks.

The proposed strategy is compared with the \ref{eq:ADP} and the \ref{eq:NEDP}. The considered parameter selection rules are applied \textit{a posteriori}. In other words, the TV-KL model is solved on a grid of different $\mu$-values; then, for each output image, we compute the discrepancy function, involved in the ADP and NEDP, and the whiteness measure, which is used for the PWP. 
The $\mu$-values selected by the ADP, NEDP and the PWP will be denoted by $\mu^{(A)}$, $\mu^{(NE)}$ and $\mu^{(W)}$, respectively.

The quality of the output image $\hat{\bm{x}}$ with respect to the original image $\bar{\bm{x}}$ is measured by means of two scalar measures, namely the Structural Similarity Index (SSIM) \cite{Wang(2204)} and the Signal-to-Noise-Ratio (SNR) defined by
\begin{equation}
\text{SNR}(\hat{\bm{x}},\bar{\bm{x}}) = 10\log_{10} \frac{||\bar{\bm{x}}- \mathrm{E}[\bar{\bm{x}}]||_{2}^{2}}{||\bar{\bm{x}}-\hat{\bm{x}}||_{2}^{2}}.
\end{equation}
In the performed tests, the ADMM iterations are stopped as soon as
\begin{equation}
\delta_{\bm{x}}^{(k)} = \frac{\|\bm{x}^{(k)}-\bm{x}^{(k-1)}\|_2}{\|\bm{x}^{(k-1)}\|_2}<10^{-6}\,,\qquad k\in\mathbb{N}\setminus\{0\}\,,
\end{equation}
while the ADMM penalty parameter $\beta$ is set manually so as to fasten the convergence of the alternating scheme.

\subsection{Image restoration}

We start testing our proposal on the image restoration task, and consider two test images, namely \texttt{satellite} ($256\times 256$) and \texttt{cells} ($236\times 236$), with pixel values between 0 and 1, shown in Figures \ref{fig:true}a, \ref{fig:true}b.

\begin{figure}
	\centering
	\setlength{\tabcolsep}{0.3pt}
	\begin{tabular}{cccc}
		\includegraphics[height=3.1cm]{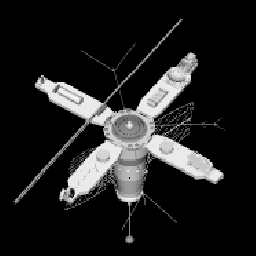}   &\includegraphics[height=3.1cm]{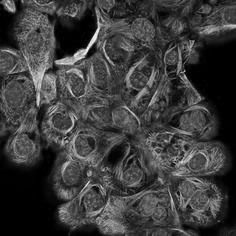}  &\includegraphics[height=3.1cm]{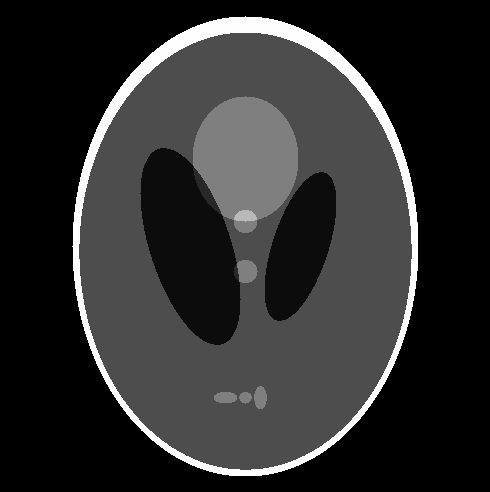} &\includegraphics[height=3.1cm]{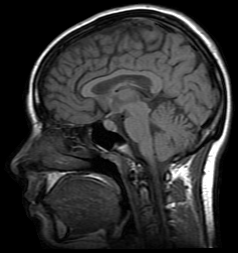}\\
		(a)&(b)&(c)&(d)
	\end{tabular}
	\caption{From left to right: original \texttt{satellite} ($256\times 256$), \texttt{cells} ($236\times 236$), \texttt{shepp logan ($500\times 500$)} and \texttt{brain} ($238\times 253$) test images considered for the numerical experiments.}
	\label{fig:true}
\end{figure}

We simulate the acquisition process by multiplying the original images by a factor $\kappa\in\mathbb{R}_{++}$ representing the maximum number of photons hitting the image domain, in expectation. Clearly, the lower the value of $\kappa$ the noisier the data, yielding a more difficult image restoration problem. Then, the resulting images are been corrupted by space-invariant Gaussian blur, with blur kernel generated by the Matlab routine \texttt{fspecial}, which is characterized by two parameters: the \texttt{band} parameter, representing the side length (in pixels) of the square support of the kernel, and \texttt{sigma}, that is the standard deviation (in pixels) of the isotropic bivariate Gaussian distribution defining the kernel in the continuous setting. In our tests, we set \texttt{band}=5, \texttt{sigma}=1. Then, we add a constant emission background $\bm{b}$ equal to $2\times10^{-3}$, obtaining what we define as $\bm{\bar{\lambda}} = \bm{\mathrm{H}}\bm{\bar{x}} + \bm{b}$. Finally, the observed image $\bm{y} = \bm{\mathrm{poiss}}(\bm{\bar{\lambda}})$ is pseudo-randomly generated by a \textit{m}-variate independent Poisson realization with mean vector $\bm{\bar{\lambda}}$.

In Figure \ref{fig:sat_curve}c,d we show the Whiteness function $W(\mu)$, as defined in \ref{eq:PWP}, for the first image \texttt{satellite} and $\kappa=5$ (left) and $\kappa= 10$ (right). 
The vertical dashed red lines correspond to the minimum of the function $W(\mu)$, i.e. to the chosen values of $\mu$ according to the Poisson Whiteness Principle, namely $\mu^{(W)}$.

The black curves in Figure \ref{fig:sat_curve}a,b represent the discrepancy function $\mathcal{D}(\mu,\bm{y})$ as defined in \eqref{eq:discrep}, while the green and magenta dashed lines represent the discrepancy values $\Delta^{(A)}$ and $\Delta^{(NE)}(\mu)$ as defined in \ref{eq:ADP} and \ref{eq:NEDP}, respectively. 

In Figure \ref{fig:sat_curve}e,f, we show the SNR (in blue) and SSIM (in orange) values achieved for different $\mu$ values with $\kappa=5,10$. The red, green and magenta vertical lines correspond to the $\mu$ values chosen with the newly proposed method and the two considered versions of the DP. We remark that the $\mu$ values selected by the discrepancy principles correspond to the intersection of $\mathcal{D}(\mu,\bm{y})$ and $\Delta^{(A)}$,$\Delta^{(NE)}(\mu)$, respectively. Note that, in the low-count regime, the PWP achieves higher values of SNR and SSIM if compared to the ADP and NEDP.

Furthermore, at the bottom of Figure \ref{fig:sat_curve}, we report, for different counting regimes $\kappa$, the values of the selected $\mu$ , the SNR and SSIM values for the three considered strategies. 
For each $\kappa$, the highest values of SNR and SSIM are reported in bold. As already observed in Figure \ref{fig:sat_curve}e,f, the PWP outperforms the ADP and NEDP in terms of SNR and SSIM for the low-middle counts acquisitions (up to $\kappa=50$). For the higher counts NEDP and PWP achieve similar quality measures, with NEDP being slightly better.

For a visual comparison, in Figure \ref{fig:satellite}, we show the observed images and the output restorations obtained by employing  ADP, NEDP and PWP for $\kappa=5$ (top row) and $\kappa=10$ (bottom row). In both cases, the NEDP and the PWP return similar results, with the latter being more capable of preserving the original contrast in the image. On the other hand, the output images obtained by selecting $\mu$ according to ADP are strongly over-regularized.


\begin{figure}
	\centering
	\setlength{\tabcolsep}{2pt}
	\begin{tabular}{ccc}
		\includegraphics[height=3.9cm]{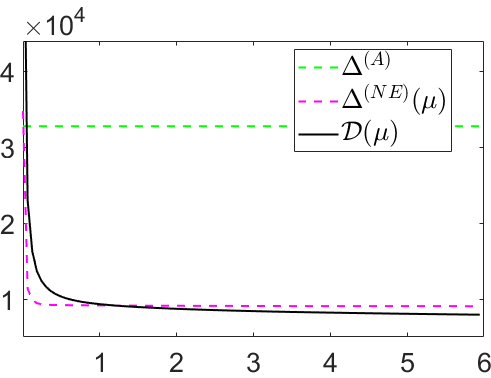}   &\includegraphics[height=3.9cm]{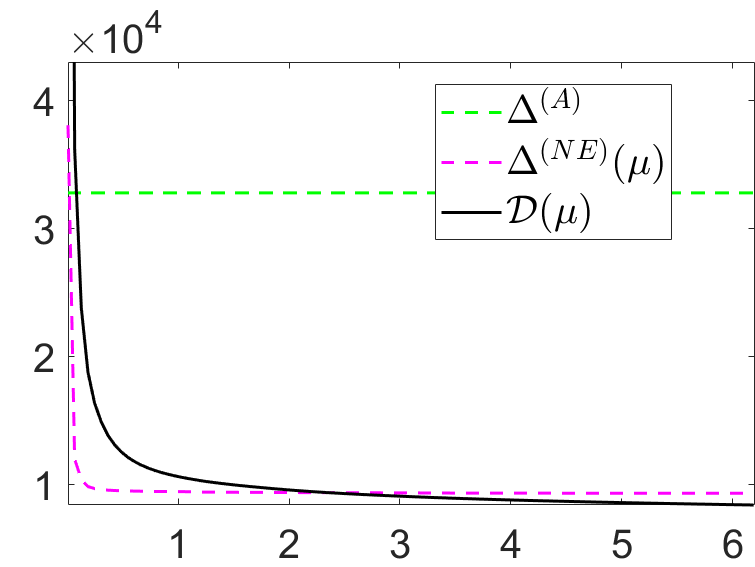} \\
		(a)&(b)\\
		\includegraphics[height=3.9cm]{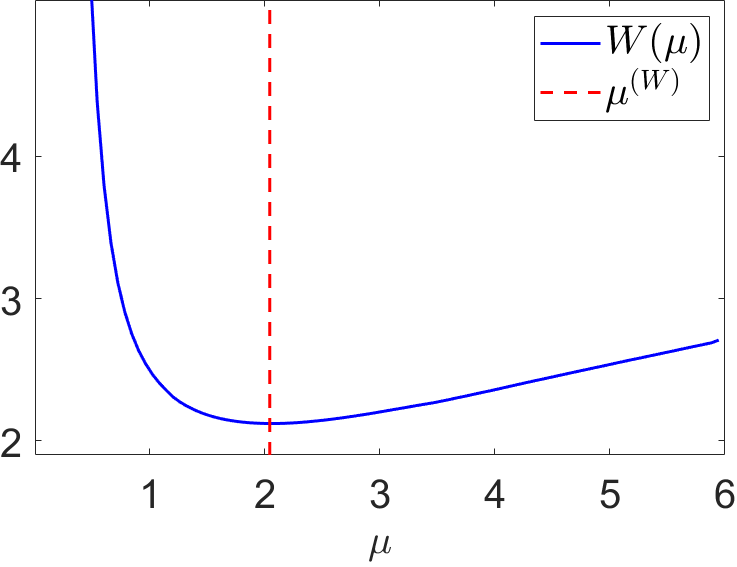} 
		&\includegraphics[height=3.9cm]{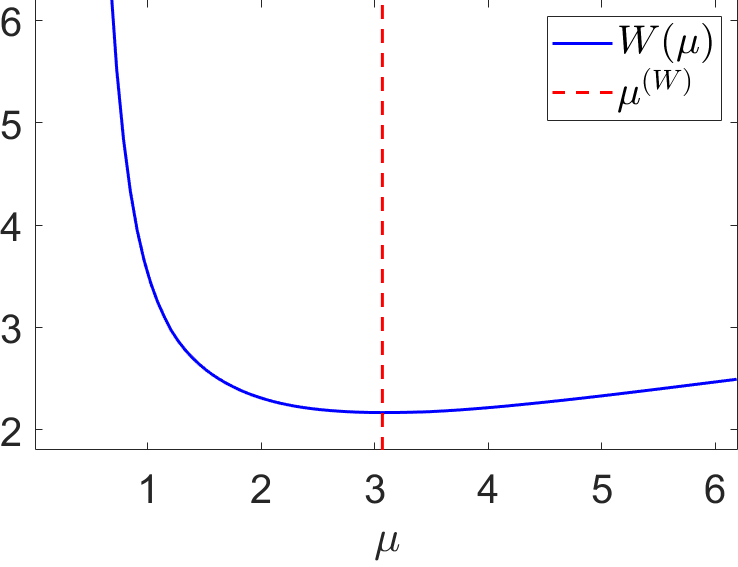} \\
		(c)&(d)\\
		\includegraphics[height=3.9cm]{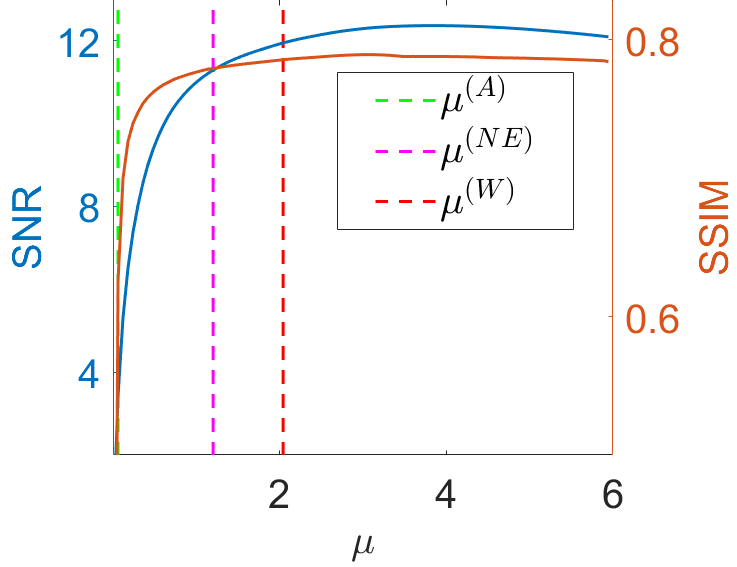}
		&
		\includegraphics[height=3.9cm]{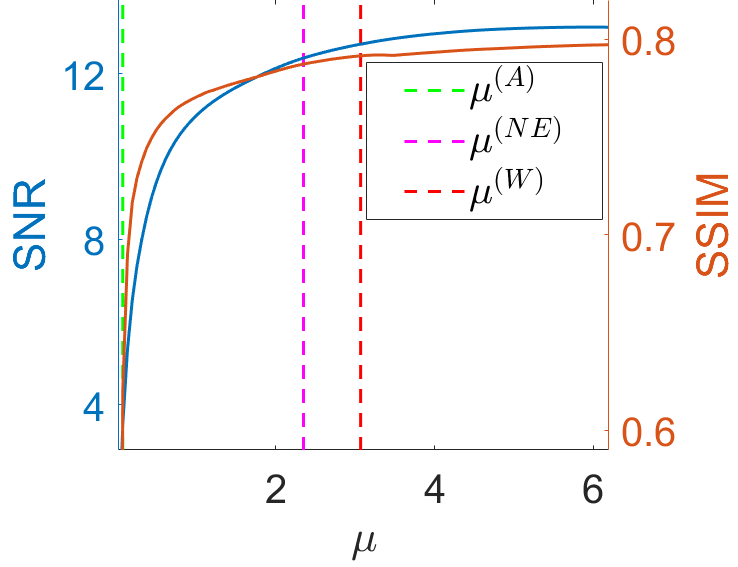}     \\
		(e)&(f)
	\end{tabular}

	\setlength{\tabcolsep}{4pt}
	\begin{tabular}{r||rrr||rrr||rrr}
		&
		\multicolumn{3}{c}{ADP}&\multicolumn{3}{c}{NEDP}&\multicolumn{3}{c}{PWP}\\
		\hline\hline
		$\kappa$& ${\mu}^{(A)}$&SNR&SSIM& ${\mu}^{(NE)}$&SNR&SSIM& ${\mu}^{(W)}$&SNR&SSIM\\
		\hline\hline
		{1.5}&$2\times 10^{-5}$&-0.001&0.009&0.841&10.270&0.786&1.201&\textbf{10.618}&\textbf{0.787}\\
		{5}&0.065&3.408&0.625 &1.205&11.286&0.779&2.045&\textbf{11.944}&\textbf{0.785}\\
		{10}&0.068&3.508&0.6199&2.348&12.384&0.787&3.068&\textbf{12.719}&\textbf{0.791}\\
		{20}&0.188&6.580&0.708&3.848&13.179&0.792&4.388&\textbf{13.328}&\textbf{0.79}4\\
		{50}&0.380&8.688&0.724 &6.800&14.206&0.805&7.460&\textbf{14.313}&\textbf{0.808}\\
		{100}&0.760&10.574&0.742&11.380&\textbf{15.017}&\textbf{0.823}&11.080&14.983&0.822\\
		{1000}&8.260&14.747&0.805 &60.760&\textbf{17.540}&\textbf{0.862}&45.220&17.225&0.857\\
	\end{tabular}
	\caption{Test image \texttt{satellite}. From top to bottom: discrepancy curves, whiteness curves and achieved SNR/SSIM for $\kappa=5$ (left) and $\kappa=10$ (right). Output $\mu$- and SNR/SSIM values
		obtained by the ADP, the NEDP and the PWP for
		different $\kappa$.
		%
	}
	\label{fig:sat_curve}
\end{figure}

\begin{figure}
	\centering
	\renewcommand{\arraystretch}{0.3}
	\setlength{\tabcolsep}{8pt}
	\begin{tabular}{cccc}
		$\bm{y}$ &ADP& NEDP&PWP\\  
		\begin{overpic}[height=2.6cm]{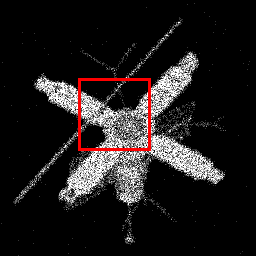}
			\put(60,1){\color{red}%
				\frame{\includegraphics[scale=0.15]{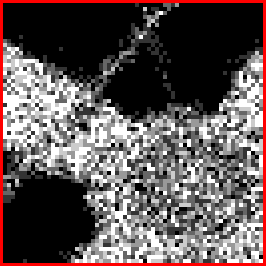}}}
		\end{overpic} & \begin{overpic}[height=2.6cm]{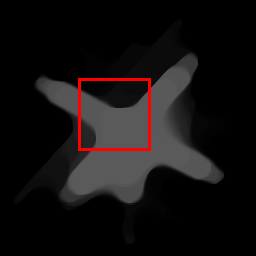}
			\put(60,1){\color{red}%
				\frame{\includegraphics[scale=0.15]{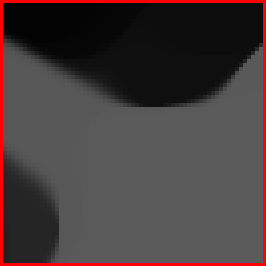}}}
		\end{overpic}   &  \begin{overpic}[height=2.6cm]{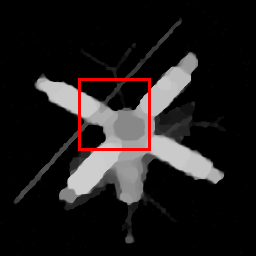}
			\put(60,1){\color{red}%
				\frame{\includegraphics[scale=0.15]{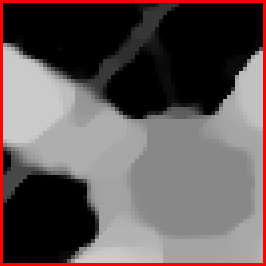}}}
		\end{overpic} &  \begin{overpic}[height=2.6cm]{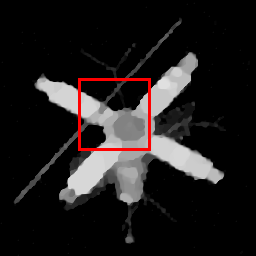}
			\put(60,1){\color{red}%
				\frame{\includegraphics[scale=0.15]{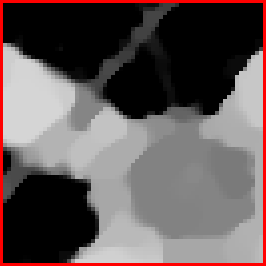}}}
		\end{overpic} \\
		\begin{overpic}[height=2.6cm]{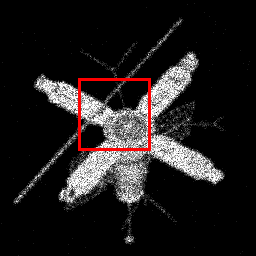}
			\put(60,1){\color{red}%
				\frame{\includegraphics[scale=0.15]{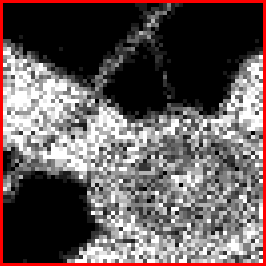}}}
		\end{overpic} & \begin{overpic}[height=2.6cm]{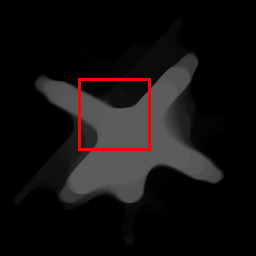}
			\put(60,1){\color{red}%
				\frame{\includegraphics[scale=0.15]{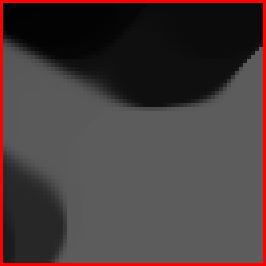}}}
		\end{overpic}   &  \begin{overpic}[height=2.6cm]{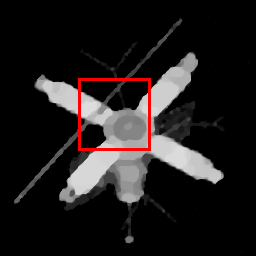}
			\put(60,1){\color{red}%
				\frame{\includegraphics[scale=0.15]{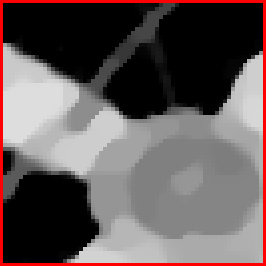}}}
		\end{overpic} &  \begin{overpic}[height=2.6cm]{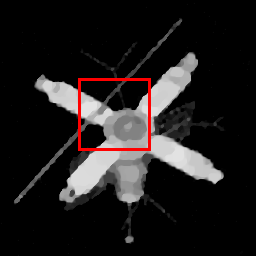}
			\put(60,1){\color{red}%
				\frame{\includegraphics[scale=0.15]{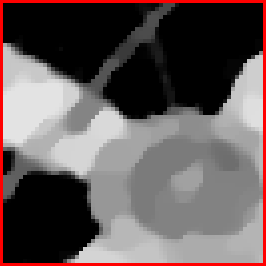}}}
		\end{overpic} 
	\end{tabular}
	\caption{Test image \texttt{satellite}. From left to right: observed image $\bm{y}$, reconstruction using the ADP, NEDP and PWP for $\kappa=5$ (top row) and $\kappa=10$ (bottom row).}
	\label{fig:satellite}
\end{figure}

For the second test image, \texttt{cells}, we report in Figure \ref{fig:cells_curve} the behavior of the discrepancy function $\mathcal{D}(\mu,\b,{y})$, of the Whiteness function $W(\mu)$ and of the SNR/SSIM curves obtained by applying the NEDP, the ADP and the PWP, for $\kappa=5$ (left) and $\kappa=10$ (right). The PWP returns larger quality measures, as it 
is the closest to the maximum SNR/SSIM achievable.

From the table reported at the bottom of Figure \ref{fig:cells_curve}, we note that the proposed $\mu$-selection criterion returns restored images outperforming the ones obtained via the NEDP and ADP both in terms of SNR and SSIM, for every $\kappa\geq 5$. For $\kappa=1.5$ the SNR and SSIM values of the PWP restoration are slightly lower, but very similar, to the one obtained with NEDP, while in all the other cases the difference between the PWP and the NEDP the difference is more marked. 

The restored images in Figure \ref{fig:cells} reflect the values recorded in the tables: the output of the PWP preserve more details and the original contrast if compared to NEDP, while the ADP restoration seems to be less subject to over-regularization if compared to the results obtained on the test image \texttt{satellite}. This can be ascribed to the number of zeros in the image, being significantly smaller in \texttt{cells}. 

\begin{figure}
	\centering
	\setlength{\tabcolsep}{2pt}
	\begin{tabular}{ccc}
		\includegraphics[height=3.9cm]{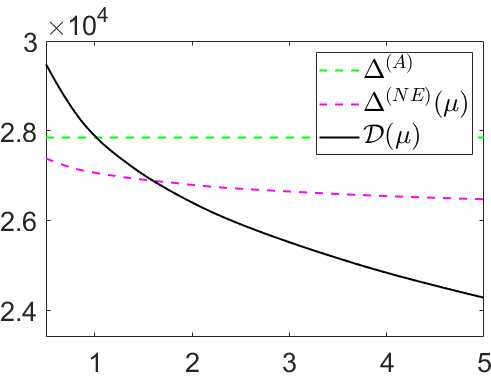}  &\includegraphics[height=3.9cm]{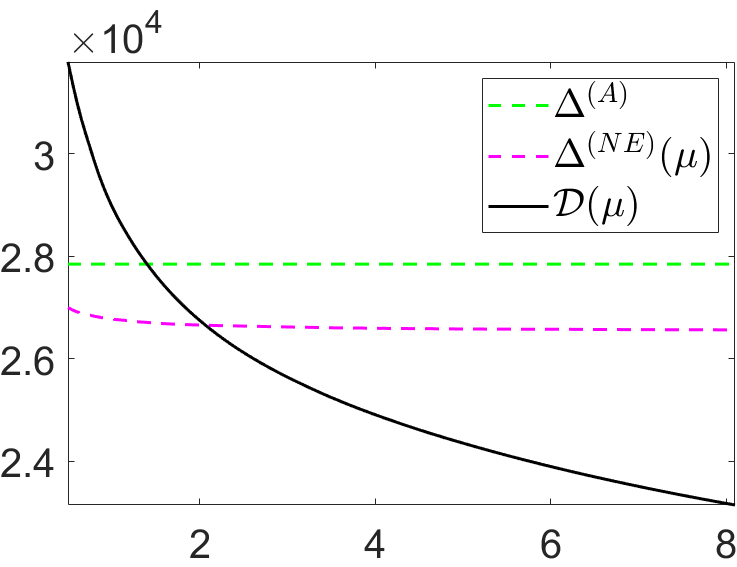}  \\
		(a)&(b)\\
		\includegraphics[height=3.9cm]{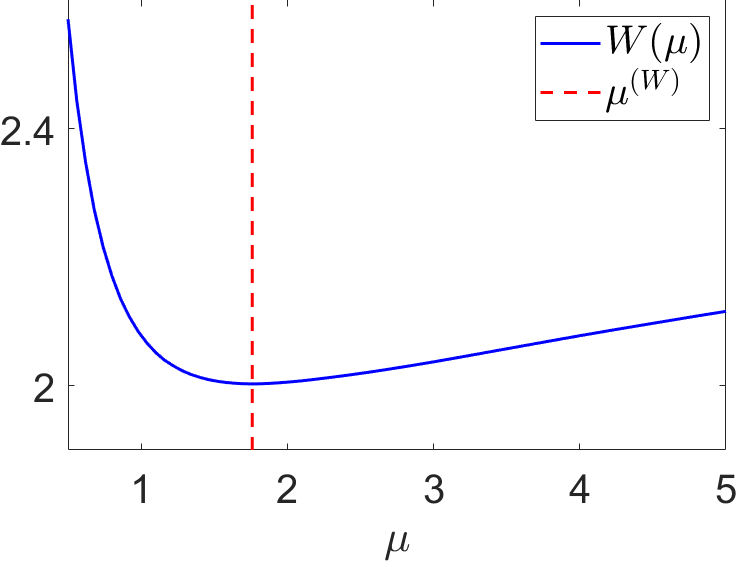}   &\includegraphics[height=3.9cm]{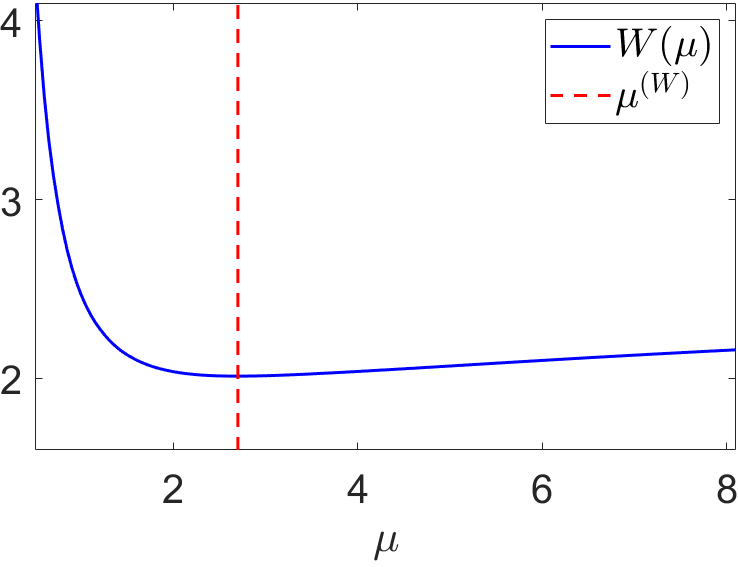}  \\
		(c)&(d)\\
		\includegraphics[height=3.9cm]{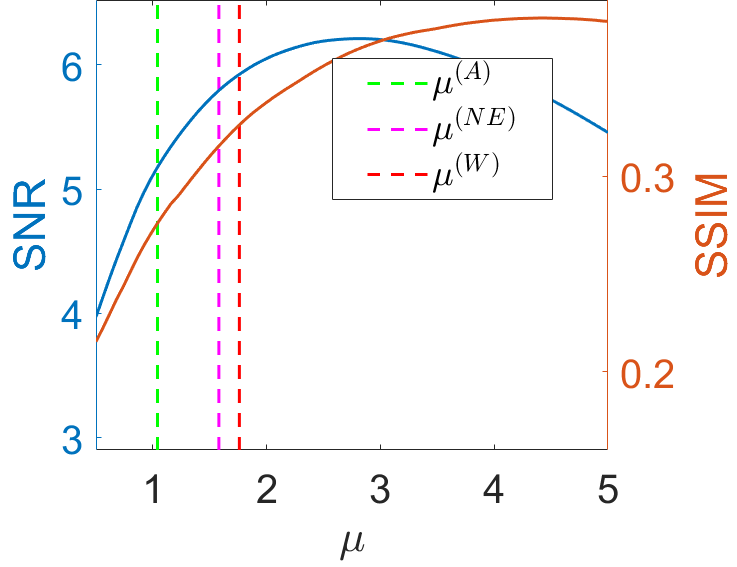}&
		\includegraphics[height=3.9cm]{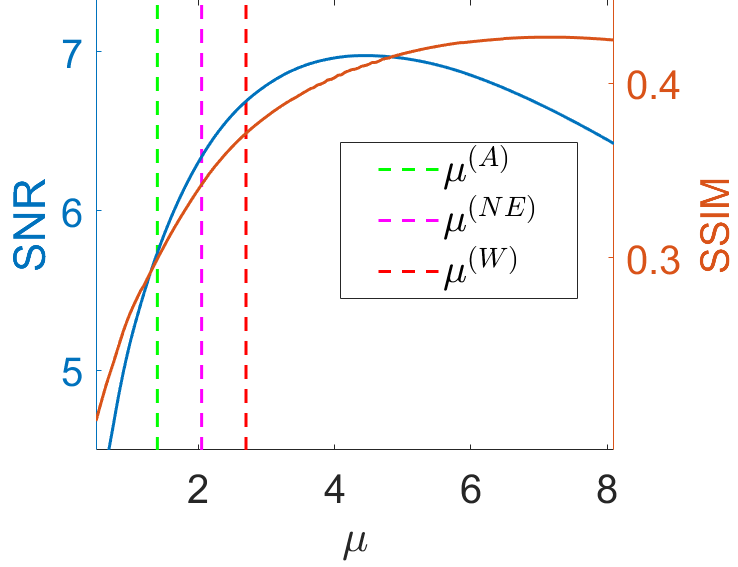}\\
		(e)&(f)\\
	\end{tabular}
	\setlength{\tabcolsep}{4pt}
	\begin{tabular}{r||rrr||rrr||rrr}
		&
		\multicolumn{3}{c}{ADP}&\multicolumn{3}{c}{NEDP}&\multicolumn{3}{c}{PWP}\\
		\hline\hline
		$\kappa$& ${\mu}^{(A)}$&SNR&SSIM& ${\mu}^{(NE)}$&SNR&SSIM& ${\mu}^{(W)}$&SNR&SSIM\\
		\hline\hline
		1.5&$7\times 10^{-5}$&0.004&0.077 &0.875&\textbf{4.625}&\textbf{0.313}&0.850&4.601&0.311\\
		5&1.040&5.176&0.276&1.580&5.794&0.315&1.760&\textbf{5.924}&\textbf{0.326}\\
		10&1.400&5.737&0.299 &2.060&6.347&0.342&2.720&\textbf{6.695}&\textbf{0.372}\\
		20&2.280&6.626&0.363&3.600&7.316&0.417&4.200&\textbf{7.503}&\textbf{0.433}\\
		50&4.500&7.830&0.452&6.600&8.337&0.493&7.440&\textbf{8.470}&\textbf{0.505}\\
		100&7.920&8.735&0.532 &10.680&9.071&0.560&12.000&\textbf{9.186}&\textbf{0.571}\\
		1000&45.000&11.075&0.717&52.140&11.207&0.730&54.660&\textbf{11.248}&\textbf{0.733}\\
	\end{tabular}
	\caption{Test image \texttt{cells}. From top to bottom: discrepancy curves, whiteness curves and achieved SNR/SSIM for $\kappa=5$ (left) and $\kappa=10$ (right). Output $\mu$- and SNR/SSIM values
		obtained by the ADP, the NEDP and the PWP for
		different $\kappa$.
		%
	}
	\label{fig:cells_curve}
\end{figure}

\begin{figure}
	\centering
	\renewcommand{\arraystretch}{1}
	\setlength{\tabcolsep}{2pt}
	\begin{tabular}{cccc}
		$\bm{y}$ &ADP& NEDP&PWP\\
		\begin{overpic}[height=2.6cm]{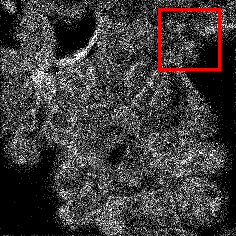}
			\put(1,1){\color{red}%
				\frame{\includegraphics[scale=0.2]{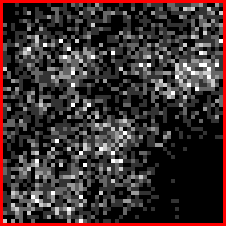}}}
		\end{overpic}    &
		\begin{overpic}[height=2.6cm]{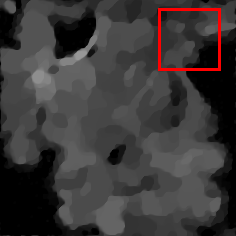}
			\put(1,1){\color{red}%
				\frame{\includegraphics[scale=0.2]{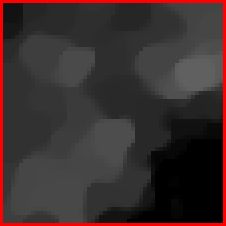}}}
		\end{overpic}   &  \begin{overpic}[height=2.6cm]{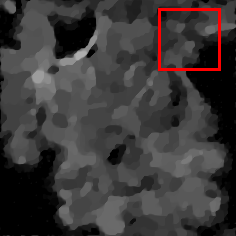}
			\put(1,1){\color{red}%
				\frame{\includegraphics[scale=0.2]{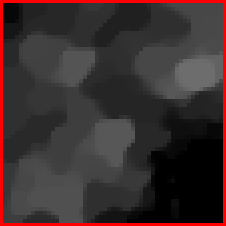}}}
		\end{overpic} &  \begin{overpic}[height=2.6cm]{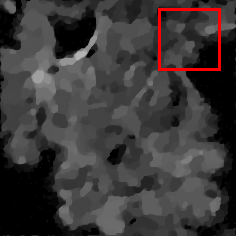}
			\put(1,1){\color{red}%
				\frame{\includegraphics[scale=0.2]{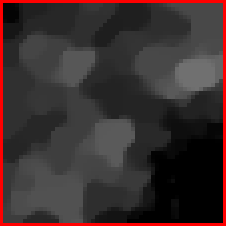}}}
		\end{overpic} \\
		\begin{overpic}[height=2.6cm]{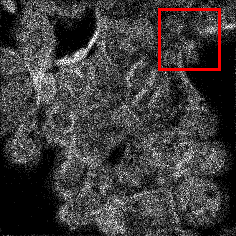}
			\put(1,1){\color{red}%
				\frame{\includegraphics[scale=0.2]{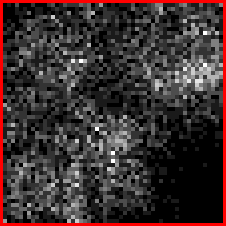}}}
		\end{overpic}  
		&
		\begin{overpic}[height=2.6cm]{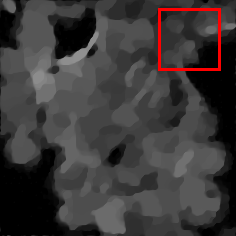}
			\put(1,1){\color{red}%
				\frame{\includegraphics[scale=0.2]{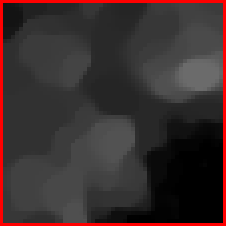}}}
		\end{overpic}   &  \begin{overpic}[height=2.6cm]{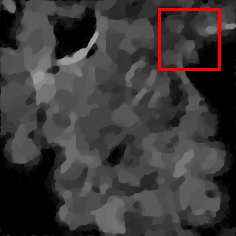}
			\put(1,1){\color{red}%
				\frame{\includegraphics[scale=0.2]{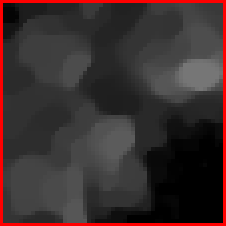}}}
		\end{overpic} &  \begin{overpic}[height=2.6cm]{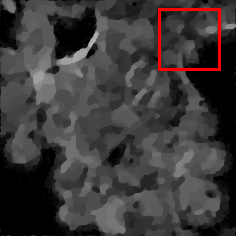}
			\put(1,1){\color{red}%
				\frame{\includegraphics[scale=0.2]{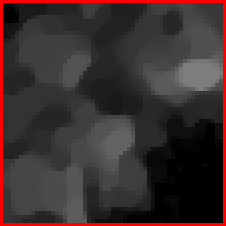}}}
		\end{overpic} \\
	\end{tabular}
	\caption{Test image \texttt{cells}. From left to right: observed image $\bm{y}$, reconstruction using the ADP, NEDP and PWP for $\kappa=5$ (top row) and $\kappa=10$ (bottom row).}
	\label{fig:cells}
\end{figure}

\subsection{CT image reconstruction}

For the CT reconstruction problem we consider the test images \texttt{shepp logan} ($500\times 500$, pixel size = $0.2$mm) and \texttt{brain} ($238\times 253$, pixel size=$0.4$mm), with pixel values between 0 and 1, shown Figures \ref{fig:true}c, \ref{fig:true}d, respectively. The acquisition process of the fan beam CT setup, i.e. the projection operator $\bm{\mathrm{H}}$, is built using the ASTRA Toolbox \cite{ASTRA} with the following parameters: 180 equally spaced angles of projections (from $0$ to $2\pi$), a detector with $500$ pixels (detector pixel size = $1/3$mm), distance between the source and the center of rotation = $300$mm, distance between the center of rotation and the detector array = $200$mm. Then, according to \eqref{eq:IRCTIR}, we take the exponential of $-\bm{\mathrm{H}\bar{x}}$ and multiply it by a factor $I_{0}\in\mathbb{N}\setminus \{0\}$ that plays the role of $\kappa$ in the restoration scenario and represents the maximum emitted photon counts, i.e., the maximum number of photons that can reach each detector pixel if the X-rays are not attenuated. In the CT tests, we consider the background emission $\bm{b}= \bm{0}$ so that the solution of \eqref{eq:lameq} can be expressed in closed-form in terms of the Lambert function. We thus compute the noise-free data $\bm{\bar{\lambda}} = I_{0}e^{-\bm{\mathrm{H}}\bm{\bar{x}}}$, while the acqisition $\bm{y} = \bm{\mathrm{poiss}}(\bm{\bar{\lambda}})$ is obtained by generating an $m$-variate independent Poisson realization with mean vector $\bm{\bar{\lambda}}$.

In analogy to the restoration case, in Figure \ref{fig:phantom_curve}, we report for the test image \texttt{shepp logan} the curve of the discrepancy function $\mathcal{D}(\mu,\bm{y})$, as well as the Whiteness curve $W(\mu)$ and the curves of the SNR and SSIM for the limiting values $I_0$, i.e. $I_{0}=1.5$ (left) and $I_{0}=1000$ (right). In the case of $I_{0}=1.5$ the SNR/SSIM values achieved by ADP and NEDP are significanlty far from te optimal ones. On the other hand, PWP one is very close to the maximum of both the SNR and the SSIM. For $I_{0}=1000$, the NEDP and the ADP select the same $\mu$, which allows to achieve a larger SSIM with respect to the one obatined by PWP, while our method still outperforms the other in terms of SNR. 
From the table at the bottom of Figure \ref{fig:phantom_curve}, we observe that the PWP outperforms the ADP and the NEDP in terms of SNR for each $I_{0}$ value, while the NEDP returns slightly better results in terms of SSIM for high-count acquisitions. 

The reconstruction results shown in Figure \ref{fig:phantom} reflect the behavior of the plots. More specifically, for $I_{0} = 1.5$ the ADP reconstruction appears to be over-regularized; NEDP allows to reconstruct only the central ellipsis, which appear to be merged; finally, in the PWP reconstruction the two ellipsis are more visible and the white edge of the phantom is sharper. In the case of $I_{0} = 1000$, the three reconstructions are similar, with the PWP being more capable of separating the three fine details highlighted in the super-imposed close-up. 
\begin{figure}
	\centering
	\setlength{\tabcolsep}{2pt}
	\begin{tabular}{ccc}
		\includegraphics[height=3.9cm]{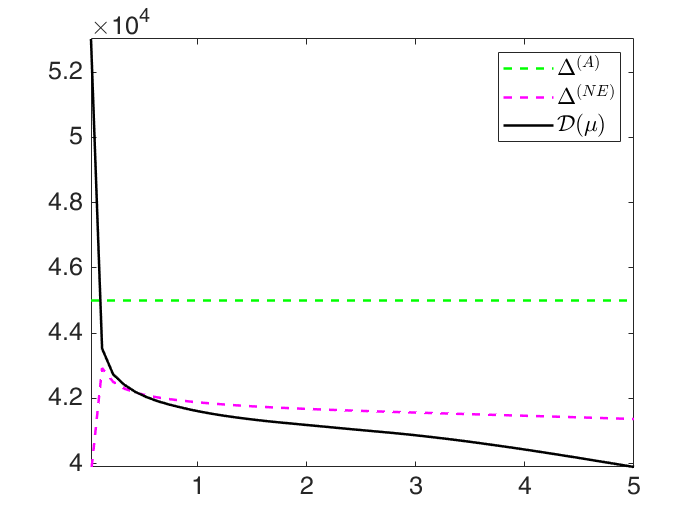}   &\includegraphics[height=3.9cm]{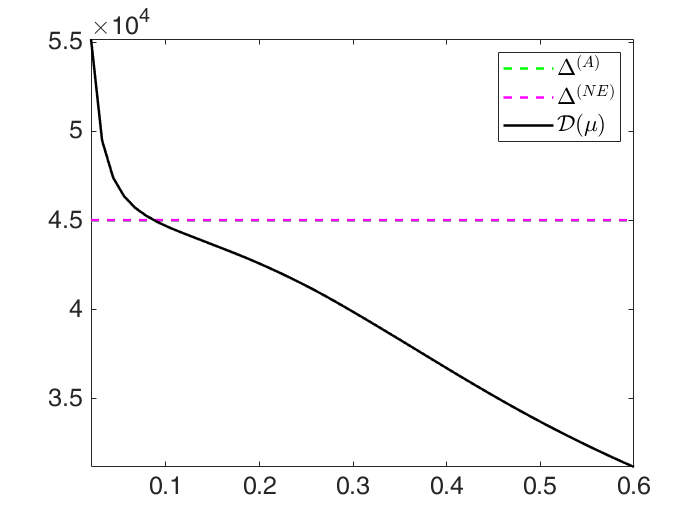} \\
		(a)&(b)\\
		\includegraphics[height=3.9cm]{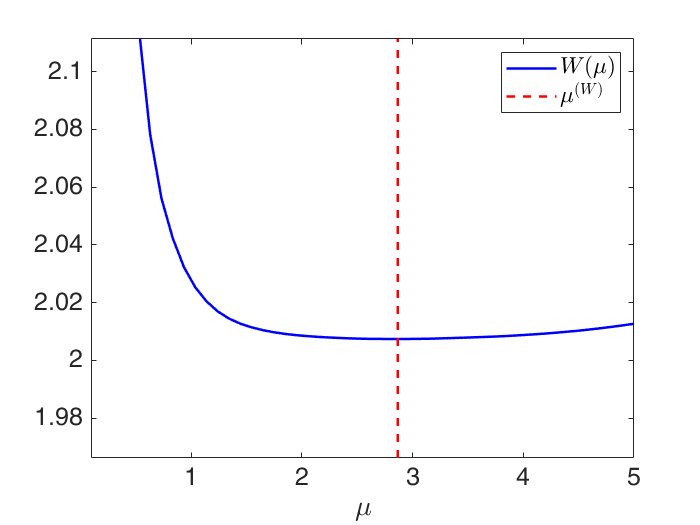}   &\includegraphics[height=3.9cm]{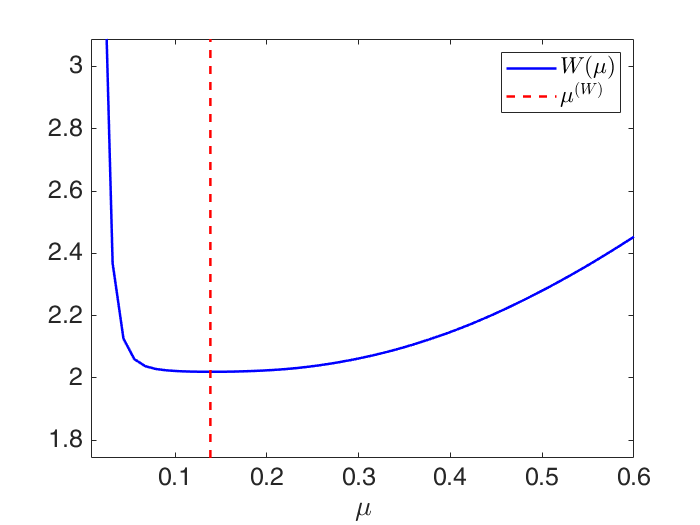} \\
		(c)&(d)\\
		\includegraphics[height=3.9cm]{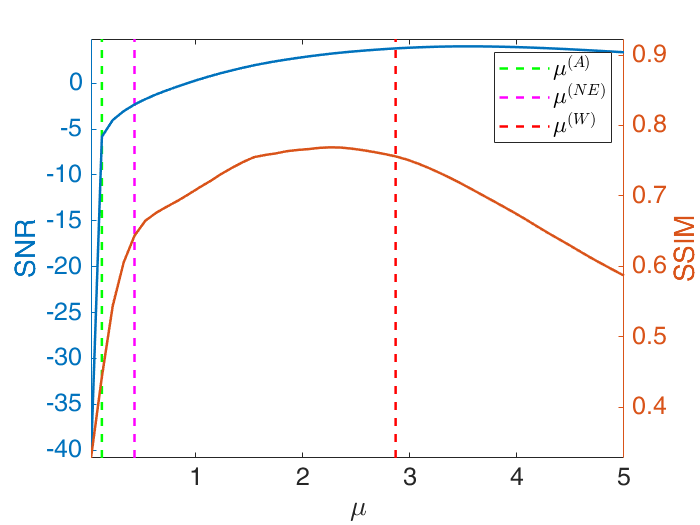}&
		\includegraphics[height=3.9cm]{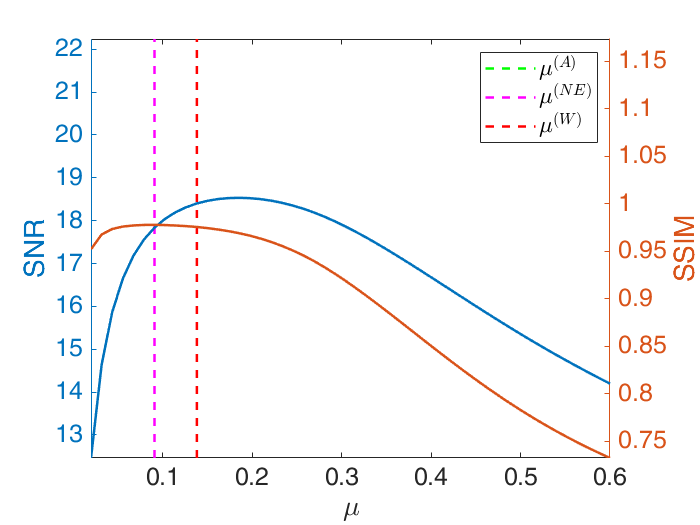} \\
		(e) & (f)
	\end{tabular}
	\setlength{\tabcolsep}{4pt}
	\begin{tabular}{r||rrr||rrr||rrr}
		&
		\multicolumn{3}{c}{ADP}&\multicolumn{3}{c}{NEDP}&\multicolumn{3}{c}{PWP}\\
		\hline\hline
		$I_0$& ${\mu}^{(A)}$&SNR&SSIM& ${\mu}^{(NE)}$&SNR&SSIM& ${\mu}^{(W)}$&SNR&SSIM\\
		\hline\hline
		1.5&0.122&-5.838&0.443 &0.426&-2.329&0.643&2.865&\textbf{3.785}&\textbf{0.756}\\
		5&5.555&2.899&0.455&0.684&3.569&0.793&1.363&\textbf{5.997}&\textbf{0.816}\\
		10&3.351&4.449&0.515 &0.733&6.403&0.853&1.024&\textbf{7.441}&\textbf{0.856}\\
		20&1.522&8.976&0.755 &0.530&8.550&\textbf{0.889}&0.861&\textbf{9.786}&0.883\\
		50&0.564&\textbf{11.626}&\textbf{0.974} &0.352&10.717&0.992&0.564&\textbf{11.626}&\textbf{0.974}\\
		100&0.322&13.141&0.944&0.261&12.698&\textbf{0.945}&0.442&\textbf{13.436}&0.935\\
		1000&0.091&17.837&\textbf{0.977} & 0.091&17.837&\textbf{0.977}&0.138&\textbf{18.401}&0.975\\
	\end{tabular}
	\caption{Test image \texttt{shepp logan}. From top to bottom: discrepancy curves, whiteness curves and achieved SNR/SSIM for $I_{0}=1.5$ (left) and $I_{0}=1000$ (right). Output $\mu$- and SNR/SSIM values
		obtained by the ADP, the NEDP and the PWP for
		different $I_0$.
		%
	}
	\label{fig:phantom_curve}
\end{figure}

\begin{figure}
	\centering
	\renewcommand{\arraystretch}{0.3}
	\setlength{\tabcolsep}{7pt}
	\begin{tabular}{cccc}
		$\bm{y}$&ADP& NEDP&PWP\\  
		\includegraphics[scale = 0.15]{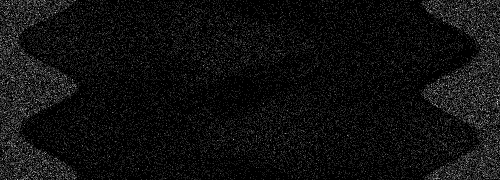}  & \begin{overpic}[height=2.4cm]{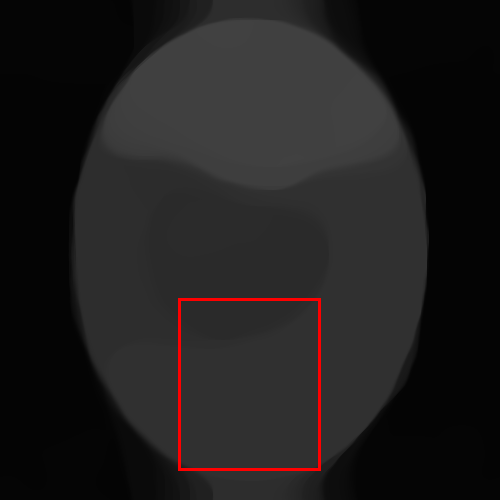}
			\put(60,1){\color{red}%
				\frame{\includegraphics[scale=0.07]{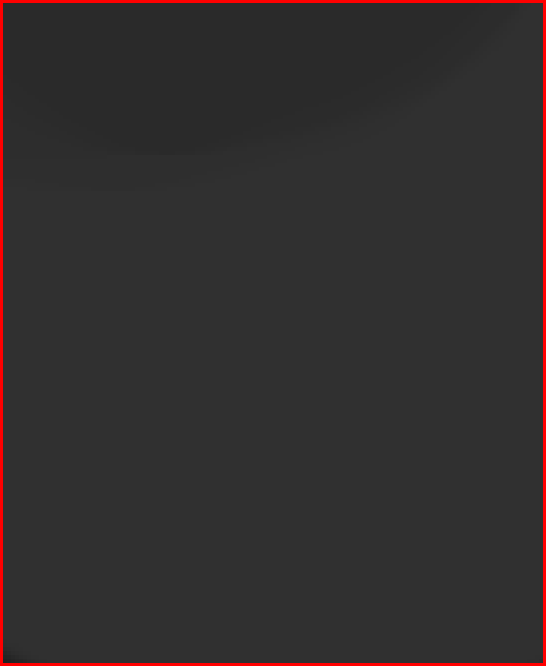}}}
		\end{overpic}   &  \begin{overpic}[height=2.4cm]{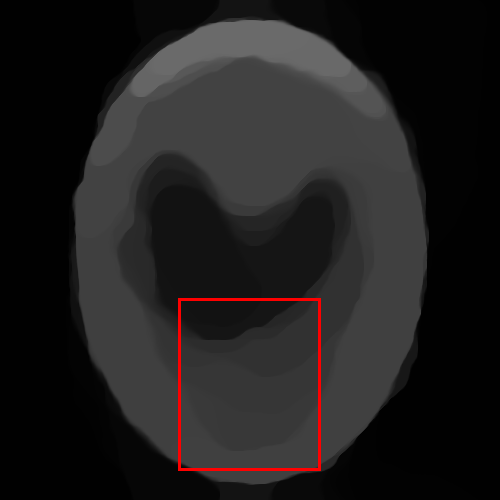}
			\put(60,1){\color{red}%
				\frame{\includegraphics[scale=0.07]{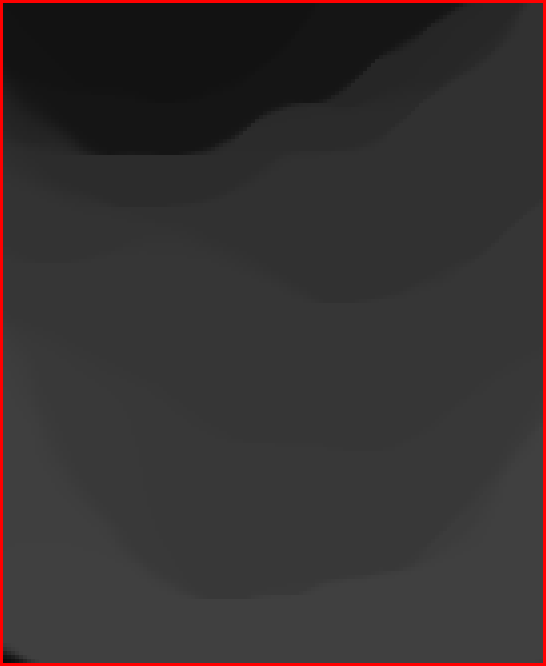}}}
		\end{overpic} &  \begin{overpic}[height=2.4cm]{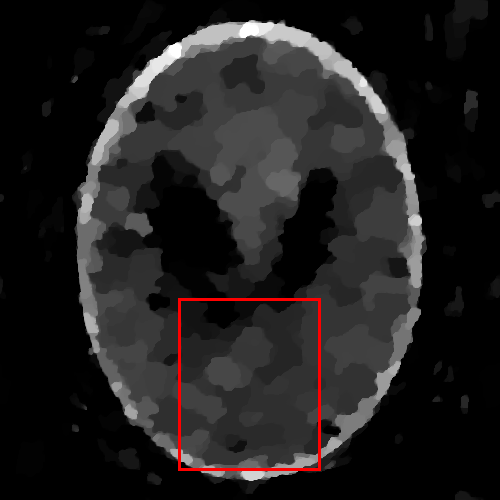}
			\put(60,1){\color{red}%
				\frame{\includegraphics[scale=0.07]{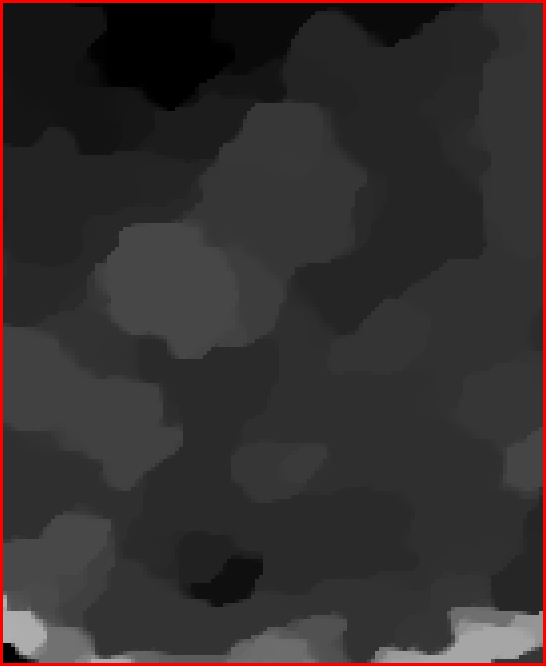}}}
		\end{overpic} \\
		
		\includegraphics[scale = 0.15]{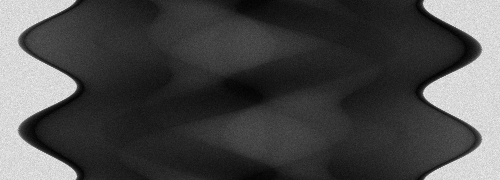} & \begin{overpic}[height=2.4cm]{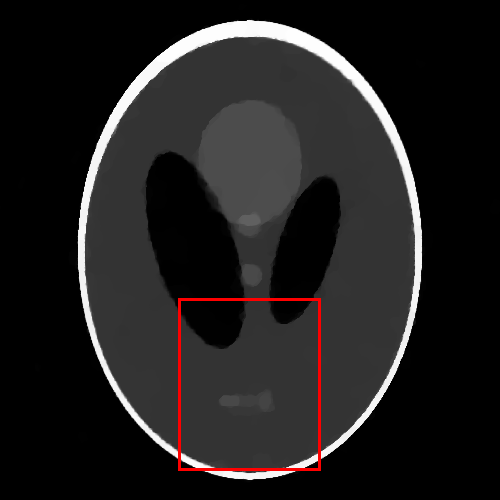}
			\put(60,1){\color{red}%
				\frame{\includegraphics[scale=0.07]{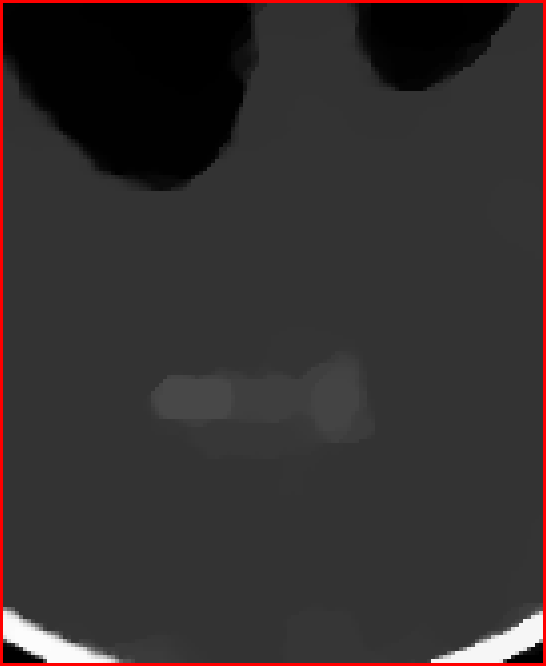}}}
		\end{overpic}   &  \begin{overpic}[height=2.4cm]{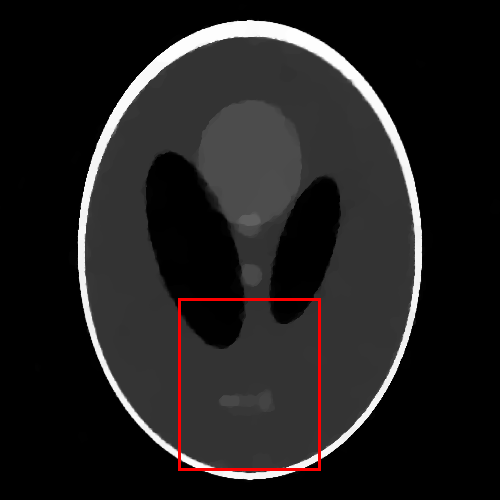}
			\put(60,1){\color{red}%
				\frame{\includegraphics[scale=0.07]{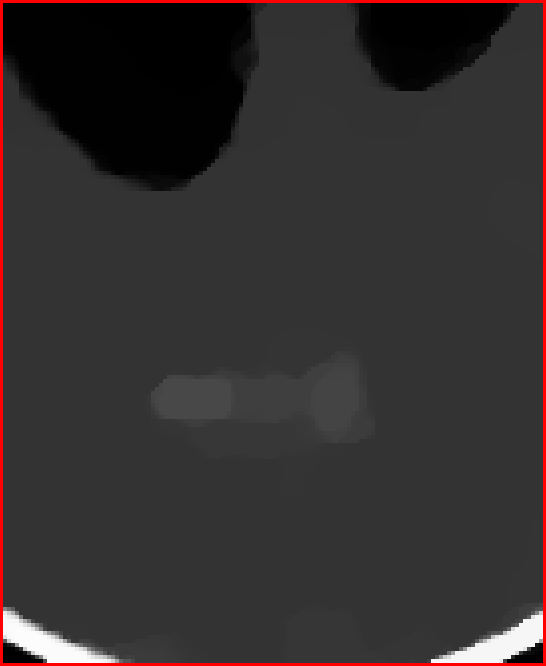}}}
		\end{overpic} &  \begin{overpic}[height=2.4cm]{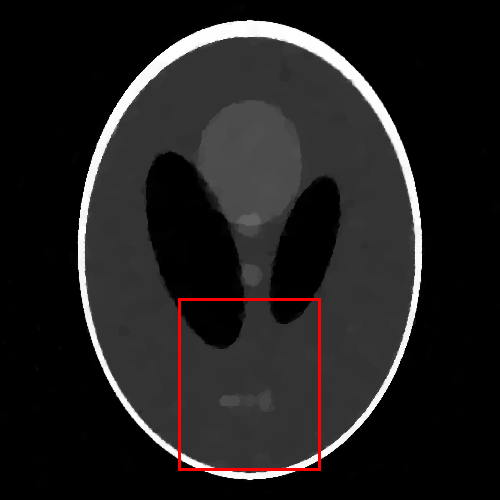}
			\put(60,1){\color{red}%
				\frame{\includegraphics[scale=0.07]{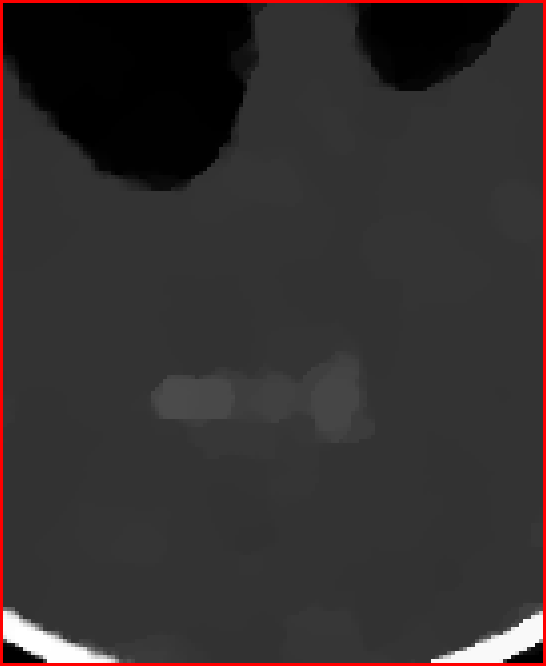}}}
		\end{overpic} 
		
	\end{tabular}
	\caption{Test image \texttt{shepp logan}. From left to right: observed data $\bm{y}$, reconstruction using the ADP, NEDP and PWP for $I_{0}=1.5$ (top row) and $I_{0}=1000$ (bottom row).}
	\label{fig:phantom}
\end{figure}

For the last test image, \texttt{brain}, we show in Figure \ref{fig:mri_curve} the behaviour of the discrepancy function $\mathcal{D}(\mu,\bm{y})$, of the Whiteness function $W(\mu)$, as well as of the SNR and SSIM values for $I_{0} = 1.5$ and $I_{0}=1000$. Note that the PWP achieves higher SNR and SSIM values compared to the ADP and NEDP for lower values of $I_{0}$. However, we observe that when considering higher values of $I_{0}$, the ADP reconstruction can outperform PWP for some of the considered doses. 

The reconstruction computed by ADP, NEDP and PWP are shown in Figure \ref{fig:mri}: we can see a higher level of details in the PWP reconstruction, both in the the low-dose and high-dose case. For $I_{0}=1.5$, only PWP is able to recover the upper part of the skull bone, while for $I_{0}=1000$ the difference mainly concerns the level of details present in the reconstruction, as shown in the close-ups.

\begin{figure}
	\centering
	\setlength{\tabcolsep}{2pt}
	\begin{tabular}{ccc}
		\includegraphics[height=3.9cm]{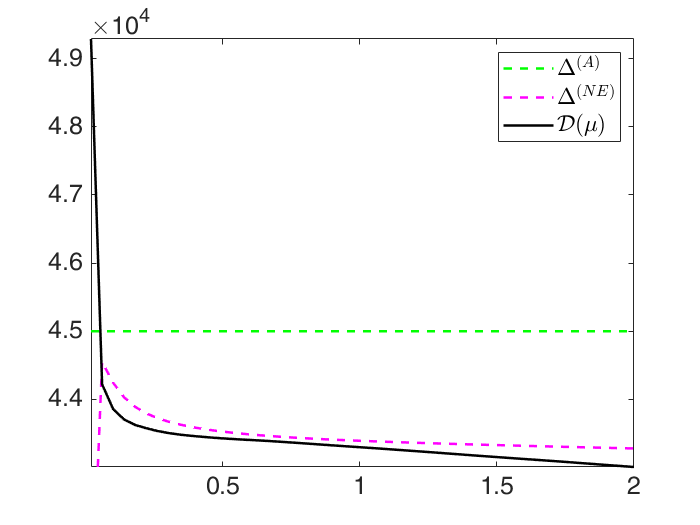}   &\includegraphics[height=3.9cm]{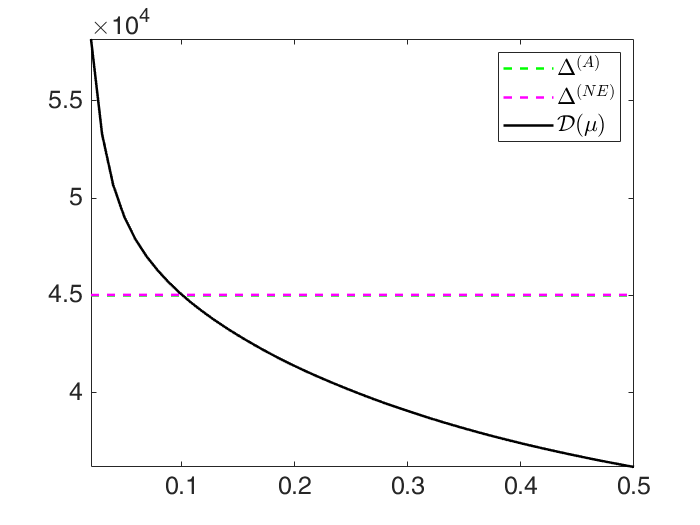} \\
		(a)&(b)\\
		\includegraphics[height=3.9cm]{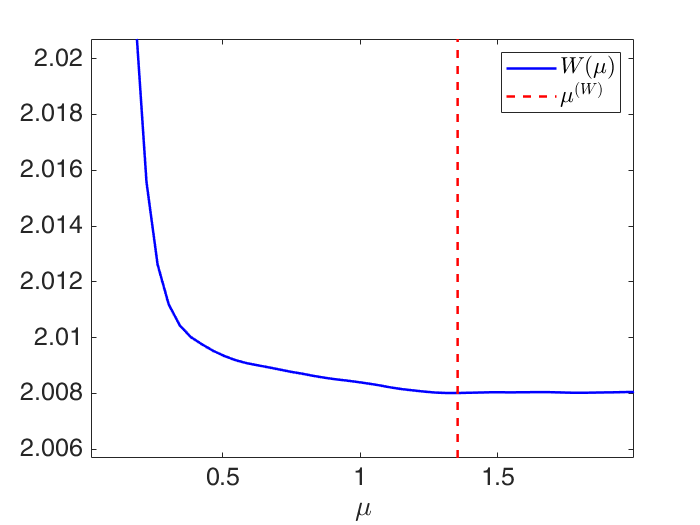}   &\includegraphics[height=3.9cm]{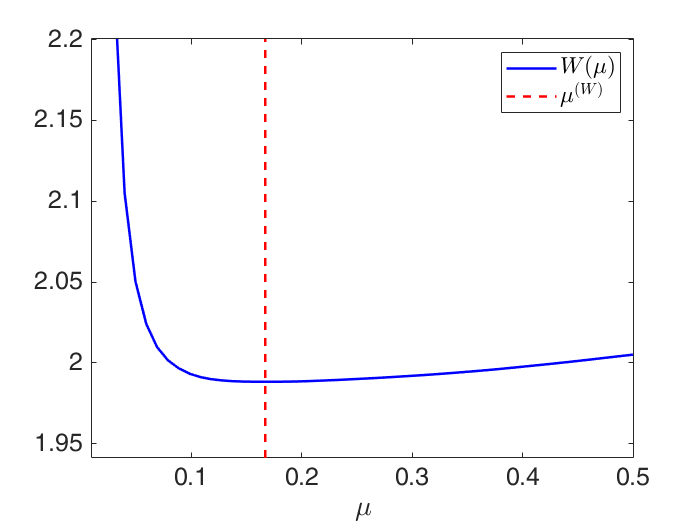} \\
		(c)&(d)\\
		\includegraphics[height=3.9cm]{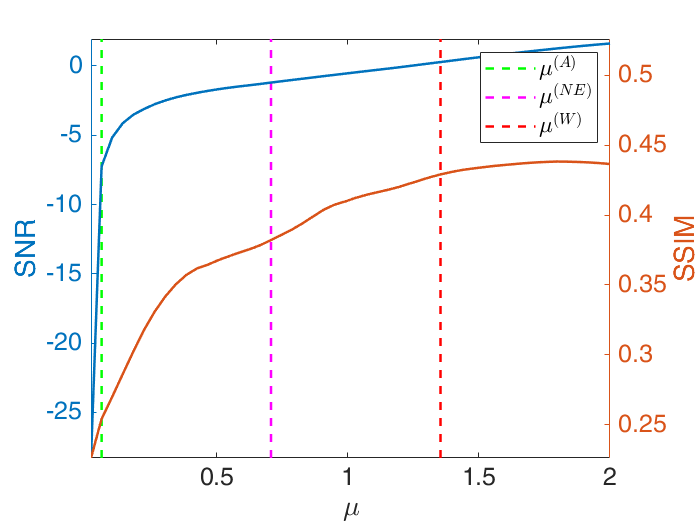}&
		\includegraphics[height=3.9cm]{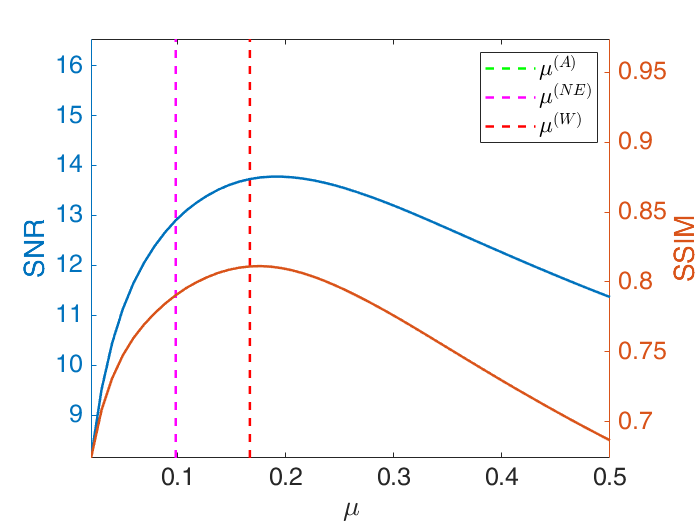} \\
		(e) & (f)
	\end{tabular}
	\setlength{\tabcolsep}{4pt}
	\begin{tabular}{r||rrr||rrr||rrr}
		&
		\multicolumn{3}{c}{ADP}&\multicolumn{3}{c}{NEDP}&\multicolumn{3}{c}{PWP}\\
		\hline\hline
		$I_0$& ${\mu}^{(A)}$&SNR&SSIM& ${\mu}^{(NE)}$&SNR&SSIM& ${\mu}^{(W)}$&SNR&SSIM\\
		\hline\hline
		1.5&0.060&-7.251&0.254&0.706&-1.200&0.382&1.853&\textbf{0.278}&\textbf{0.429}\\
		5&2.657&3.308&0.323&0.428&0.468&\textbf{0.432}&2.200&\textbf{3.866}&0.381\\
		10&2.428&3.432&0.311 &0.542&3.340&0.523&0.771&\textbf{4.472}&\textbf{0.534}\\
		20&1.302&\textbf{6.154}&0.466 &0.383&4.714&0.556&0.420&{5.040}&\textbf{0.562}\\
		50&0.516&8.162&\textbf{0.622} &0.320&7.636&0.612&0.589&\textbf{8.310}&0.614\\
		100&0.300 &\textbf{8.970} &\textbf{0.672} &0.257 &8.580 &0.666 &0.286 &8.854 &0.670 \\
		1000&0.098&12.901&0.790 &0.098&12.901&0.790&0.166&\textbf{13.728}& \textbf{0.811}\\
	\end{tabular}
	\caption{Test image \texttt{brain}. From top to bottom: discrepancy curves, whiteness curves and achieved SNR/SSIM for $I_{0}=1.5$ (left) and $I_{0}=1000$ (right). Output $\mu$- and SNR/SSIM values
		obtained by the ADP, the NEDP and the PWP for
		different $I_0$.
		%
	}
	\label{fig:mri_curve}
\end{figure}

\begin{figure}
	\centering
	\renewcommand{\arraystretch}{0.3}
	\setlength{\tabcolsep}{7pt}
	\begin{tabular}{cccc}
		$\bm{y}$&ADP& NEDP&PWP\\  
		\includegraphics[scale = 0.15]{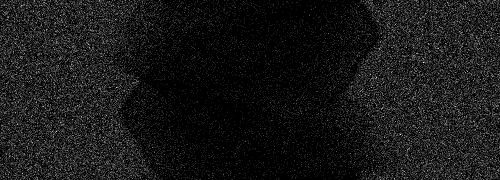} & \begin{overpic}[height=2.4cm]{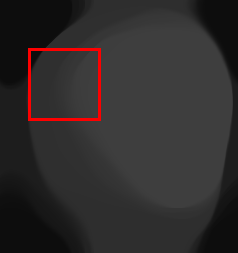}
			\put(60,1){\color{red}%
				\frame{\includegraphics[scale=0.13]{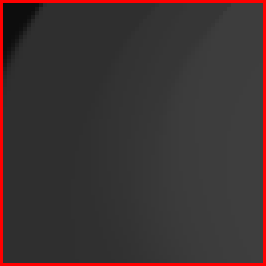}}}
		\end{overpic}   &  \begin{overpic}[height=2.4cm]{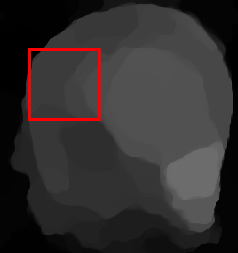}
			\put(60,1){\color{red}%
				\frame{\includegraphics[scale=0.13]{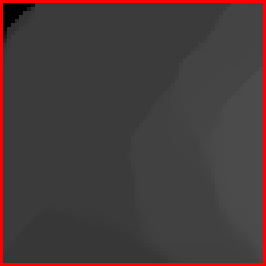}}}
		\end{overpic} &  \begin{overpic}[height=2.4cm]{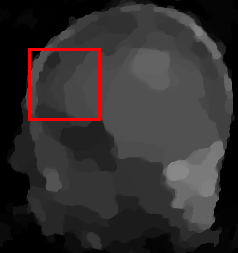}
			\put(60,1){\color{red}%
				\frame{\includegraphics[scale=0.13]{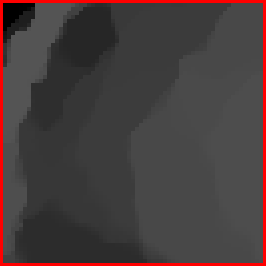}}}
		\end{overpic} \\
		\includegraphics[scale = 0.15]{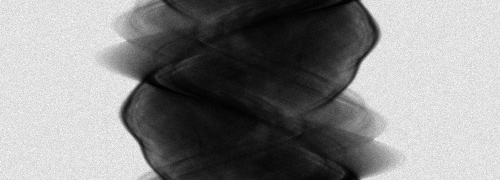} & \begin{overpic}[height=2.4cm]{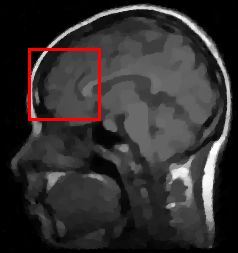}
			\put(60,1){\color{red}%
				\frame{\includegraphics[scale=0.13]{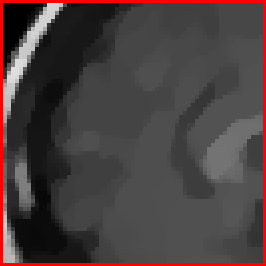}}}
		\end{overpic}   &  \begin{overpic}[height=2.4cm]{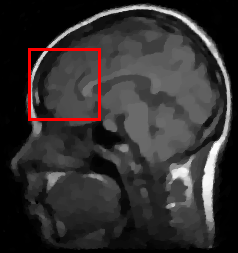}
			\put(60,1){\color{red}%
				\frame{\includegraphics[scale=0.13]{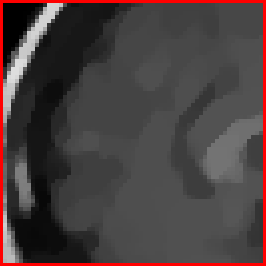}}}
		\end{overpic} &  \begin{overpic}[height=2.4cm]{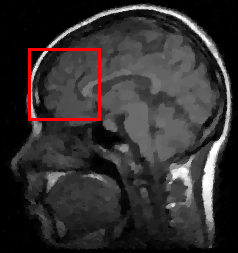}
			\put(60,1){\color{red}%
				\frame{\includegraphics[scale=0.13]{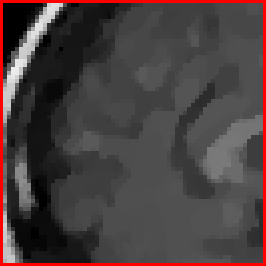}}}
		\end{overpic} 
	\end{tabular}
	\caption{Test image \texttt{brain}. From left to right: observed data $\bm{y}$, reconstruction using the ADP, NEDP and PWP for $I_{0}=1.5$ (top row) and $I_{0}=1000$ (bottom row).}
	\label{fig:mri}
\end{figure}

\section{Conclusion}
\label{sec:concl}
In this work, we have discussed the introduction of a novel parameter selection strategy in variational models under Poisson data corruption. Our proposal relies on the extension of the whiteness principle to a standardized version of the Poisson noise corrupted observations.
The derived Poisson Whiteness Principle has been tested on image restoration and CT reconstruction problems. In the latter case, we employed a linearized version of the ADMM which fasten the computations when the forward model operator does not present an advantegeous structure. The Poisson Whiteness Principle has been compared with the popular ADP and the NEDP, recently proposed by the same authors; the newly introduced approach has been shown to outperform the competitors especially in the lower-counting regimes.

\medskip

\noindent\textbf{Acknowledgements} All the authors are members of the ``National Group for
Scientific Computation (GNCS-INDAM)''. The research of FB, AL, FS has been funded by the ex60 project ``Funds for selected research topics'', while MP acknowledges the contribution of ``Young researchers funding'' awarded by GNCS-INDAM.

\bibliographystyle{plain}
\bibliography{mybibfile}

\end{document}